\newcommand{\maxx}{x^+}
\newcommand{\conv}[1]{\mathrm{conv}\ens{#1}}
\DeclareMathOperator{\aff}{aff}
\newcommand{\X}{\mathbf{X}}
\newcommand{\MAX}{\mathbf{S}}
\newcommand{\Xl}{\X_{\lambda}}
\newcommand{\Xo}{\X_{0}}
\newcommand{\Sl}{\MAX_{\lambda}}
\DeclareMathOperator{\dom}{dom}
\DeclareMathOperator{\card}{card}
\newcommand{\TheTitle}{Maximal Solutions of Sparse Analysis Regularization}
\newcommand{\TheAuthors}{A. Barbara, A. Jourani and S. Vaiter}
\headers{\TheTitle}{\TheAuthors}
\title{{\TheTitle}}
\author{A. Barbara%
  \thanks{Universit\'e de Bourgogne Franche-Comt\'e, Institut de Math\'ematiques de Bourgogne, UMR 5584 CNRS, (\email{\{barbara,jourani\}@u-bourgogne.fr})}%
  \and
  A. Jourani%
  \footnotemark[1]%
  \and
  S. Vaiter%
  \thanks{CNRS \& Universit\'e de Bourgogne Franche-Comt\'e, Institut de Math\'ematiques de Bourgogne, UMR 5584 CNRS, (\email{vaiter@u-bourgogne.fr})}
}
\begin{document}

\maketitle

\begin{abstract}
  This paper deals with the non-uniqueness of the solutions of an analysis-Lasso regularization.
  Most of previous works in this area is concerned with the case where the solution set is a singleton, or to derive guarantees to enforce uniqueness.
  Our main contribution consists in providing a geometrical interpretation of a solution with a maximal $D$-support, namely the fact that such a solution lives in the relative interior of the solution set.
  With this result in hand, we also provide a way to exhibit a maximal solution using a primal-dual interior point algorithm.
\end{abstract}

\begin{keywords}
  Lasso, analysis sparsity,  inverse problem, support identification, barrier penalization
\end{keywords}

\begin{AMS}
  90C25, 
  49J52  
\end{AMS}

\section{Introduction}

We consider the problem of estimating an unknown vector $x_0 \in \RR^n$ from noisy observations
\begin{equation}\label{eq:ip}
  y = \Phi x_0 + w \in \RR^q ,
\end{equation}
where $\Phi$ is a linear operator from $\RR^n$ to $\RR^q$ and $w$ is the realization of a noise.
This linear model is widely used in imaging for degradation such that entry-wise masking, convolution, etc, or in statistics under the name of linear regression.
Typically, the inverse problem associated to~\eqref{eq:ip} is ill-posed, and one should add additional information in order to recover at least an approximation of $x_0$.

During the last decade, sparse regularization in orthogonal basis has become a classical tool in the analysis of such inverse problem, in particular in imaging~\cite{chen1999atomi,mallat2009a-wav} or in statistics and machine learning~\cite{tibshirani1996regre}.
The sparsity of some coefficients $x \in \RR^n$ is measured using the counting function, or abusively $\ell^0$ norm, which reads
\begin{equation*}
  \norm{x}_0 = \Card (\supp(x)) \qwhereq \supp(x) = \enscond{i \in \ens{1,\dots,n}}{x_i \neq 0} ,
\end{equation*}
where $\supp(x)$ is coined the support of the vector $x$.
The associated regularization
\begin{equation*}
  \uArgmin{x \in \RR^n} \frac{1}{2} \norm{y - \Phi x}_2^2 + \lambda \norm{x}_0
\end{equation*}
is however known to be NP-hard~\cite{natarajan1995sparse}.
A first way to alievate this issue is to consider greedy methods, such as the Matching Pursuit~\cite{mallat1993matching} or derivation from it as the OMP~\cite{pati1993orthogonal}, CoSAMP~\cite{needel2009cosamp}, etc.
This will not be the concern of this paper which focus on one of its most popular convex relaxation through the $\lun$-norm.
More precisely, we consider the Lasso optimization problem~\cite{tibshirani1996regre} which reads
\begin{equation}
  \label{eq:lasso}
  \uArgmin{x \in \RR^n} \frac{1}{2} \norm{y - \Phi x}_2^2 + \lambda \normu{x} ,
\end{equation}
where the $\lun$-norm is defined as $\normu{x} = \sum_{i=1}^n \abs{x_i}$.

In this work, we consider a more general framework, known as the sparse analysis prior, cosparse prior or generalized Lasso.
The idea is to not measure the sparsity of the coefficients in an orthogonal basis, but in any \emph{dictionary}.
Formally, a dictionary $D$ is a linear operator from $\RR^p$ to $\RR^n$ which is defined through $p$ $n$-dimensional \emph{atoms} $d_i$ which may be redundant.
Using this dictionary, one can build an analysis regularization which reads $\norm{D^* \cdot}_1$ associated to the variational framework defined as
\begin{equation}
  \label{eq:p}
  \Xl = \uArgmin{x \in \RR^n}h(x)= \frac{1}{2} \norm{y - \Phi x}_2^2 + \lambda \normu{D^* x} .
\end{equation}
This framework is known in the signal processing community as sparse analysis regularization~\cite{elad2007analysis,vaiter2011robust} or cosparse regularization~\cite{nam2012cosparse}.
Probably the most popular example of analysis sparsity-inducing regularizer is the Total Variation which was introduced in~\cite{rudin1992nonlinear} in a continuous setting for denoising.
In the discrete setting, it corresponds to take $D^*$ as a discretization of a derivative operator.
In the context of one-dimensional signals, a popular choice is to take a forward finite difference.
Other popular choices of dictionary includes translation invariant wavelets (which can be viewed as a higher order total variation following~\cite{steidl2004equivalence}) or the concatenation of a derivative operator with the identity, known under the name of Fused Lasso~\cite{tibshirani2005sparsity} in statistics.

When there is no noise, i.e. $y = \Phi x_0$, it is common to use a constrained version of~\eqref{eq:p} which reads
\begin{equation}
  \label{eq:p0}
  \Xo = \uArgmin{x \in \RR^n} \normu{D^* x} \qsubjq \Phi x = y .
\end{equation}
It has been first introduced in~\cite{chen1999atomi} under the name Basis
Pursuit for $D = \Id$, and one can easily see that~\eqref{eq:p0} can be recasted as linear program (LP).

It is important to keep in mind that $\Xl$, nor $\Xo$ is typically not a singleton.
Most of previous works in this area is concerned with the case where the solution set is a singleton, or to derive guarantees to enforce uniqueness.
Necessary and sufficient conditions has been derived in~\cite{zhang2015necessary,zhang2013one} and also in~\cite{gilbert2015solution} for the constrained case.
In this paper, we tackle the case where $\Xl$ is not a singleton, and we want to better understand the structure of the solution set in this case.
Some insights are given in~\cite{tibshirani2013uniqueness}, but the results are limited to the case where $D=\Id$.
In this work, the authors give a bound on the size of the support, and prove that the LARS algorithm converges to a solution with a maximal support.
To our knowledge, our work is the first to consider the analysis case.
\section{Contributions}
\label{sec:contrib}

In~\cref{sec:sol}, we review some properties of the solution set.
In all this paper, \textbf{we consider the following hypothesis of restricted injectivity}
\begin{equation}\label{eq:hyp-inv}
  \Ker D^* \cap \Ker \Phi = \ens{0} ,
\end{equation}
in order to ensure that $\Xl$ is well-defined and bounded.
We prove in particular that $\Xl$ is a polytope, i.e. a bounded polyhedron.

Our main contribution is proved in~\cref{sec:max}.
It consist in providing a geometrical interpretation of a solution with a maximal $D$-support, namely the fact that such a solution lives in the relative interior of the solution set.
More precisely, we are concerned with the characterization of a vector of maximal $D$-support, i.e. a solution of~\eqref{eq:p} such that for every $x \in \Xl, \normz{D^* x} \leq \normz{D^* \maxx}$.
\begin{definition}
  A vector $\maxx \in \RR^n$ is \emph{a solution of maximal $D$-support} if $\maxx$ is a solution, i.e. $\maxx \in \Xl$ such that for every $x \in \Xl, \normz{D^* x} \leq \normz{D^* \maxx}$.
\end{definition}
We denote by $\Sl$ the set of solution of~\eqref{eq:p} which have maximal $D$-support.
Clearly this set is well-defined and contained in $\Xl$.
Our result is the following.
\begin{theorem}\label{thm:maximal-characterization}
  Let $\bar{x} \in \Xl$. Then $\bar{x}$ is a maximally $D$-supported solution if, and only if, $\bar{x} \in \rint \Xl$ (or equivalently if $\bar{x} \in \rint \Sl$).
  In other words, 
  \begin{equation*}
    \Sl = \rint \Sl = \rint \Xl .
  \end{equation*}
\end{theorem}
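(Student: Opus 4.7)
The strategy is to recast $\Xl$ as an explicit polyhedron whose defining inequalities are sign constraints on the coordinates of $D^* x$, and then to identify $\rint \Xl$ with the locus where those inequalities are strict --- which is precisely where $\supp(D^* x)$ is maximal. The two main steps are a pair of convexity arguments that pin down $\Phi x$ and the sign pattern of $D^* x$ on $\Xl$, followed by a standard reading of the relative interior of the resulting polyhedron.

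First I would show that $\Phi x$ is constant on $\Xl$. Since $h$ equals its minimum $v$ throughout the convex set $\Xl$, and $\frac{1}{2}\norm{y-\Phi x}_2^2$ is strictly convex in $\Phi x$ while $\normu{D^* x}$ is convex, $\Phi x$ must take the same value $\tilde y$ at any two minimizers, otherwise the midpoint would strictly decrease $h$. Consequently $\normu{D^* x} \equiv \rho := (v - \tfrac{1}{2}\norm{y-\tilde y}_2^2)/\lambda$ on $\Xl$. Next, I would extract a common sign pattern: if $x_0,x_1 \in \Xl$ realized strictly opposite signs on some coordinate $i$ of $D^* \cdot$, then a convex combination $x_t$ with $(D^* x_t)_i = 0$ would satisfy $\normu{D^* x_t} < \rho$ by a componentwise strict triangle inequality, contradicting the previous step. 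Picking any $\hat x \in \rint \Xl$ and letting $I_+, I_-, I_0$ be the positive/negative/zero coordinates of $D^* \hat x$, one further sees that $(D^* x)_i = 0$ for every $i \in I_0$ and every $x \in \Xl$: indeed $(1+\varepsilon)\hat x - \varepsilon x \in \Xl$ for some $\varepsilon > 0$ has $i$-th $D^*$-coordinate equal to $-\varepsilon (D^* x)_i$, and pairing this with $x$ would again produce opposite strict signs whenever $(D^* x)_i \neq 0$. Defining $s_i \in \{-1,0,+1\}$ accordingly, one has $\normu{D^* x} = s^\top D^* x$ on $\Xl$.

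Third, these facts show that $\Xl$ coincides with the polyhedron
\begin{equation*}
  P = \enscond{x \in \RR^n}{\Phi x = \tilde y,\ s^\top D^* x = \rho,\ s_i(D^* x)_i \geq 0\ \forall i \in I_+\cup I_-,\ (D^* x)_i = 0\ \forall i \in I_0}.
\end{equation*}
The inclusion $\Xl \subseteq P$ is exactly what the previous steps establish, and the reverse follows by direct evaluation of $h$ on any $x \in P$. Each inequality $s_i(D^* x)_i \geq 0$ with $i \in I_+ \cup I_-$ is achieved strictly at $\hat x$, so the usual characterization of the relative interior of a polyhedron yields $\rint \Xl = \rint P = \enscond{x \in \Xl}{(D^* x)_i \neq 0 \ \forall i \in I_+ \cup I_-}$. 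Since $I_+ \cup I_-$ is the maximal $D$-support attainable on $\Xl$, this set is exactly $\Sl$; and $\rint \Sl = \rint(\rint \Xl) = \rint \Xl = \Sl$ by the idempotency of the relative interior on convex sets.

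The main obstacle is the sign-rigidity step: without the global sign vector $s$ one cannot linearize $\normu{D^* x}$ on $\Xl$, and the polyhedral description --- hence the clean relative-interior identification --- is unavailable. The auxiliary use of a fixed $\hat x \in \rint \Xl$ is crucial to promote the pairwise rigidity into a statement that is uniform across $\Xl$, in particular to handle the indices in $I_0$. Once $s$ and the partition $I_+ \sqcup I_- \sqcup I_0$ are in hand, identifying $\rint \Xl$ with the maximally $D$-supported solutions is routine polyhedral theory.
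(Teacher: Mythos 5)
Your proposal is correct, but it reaches the conclusion by a genuinely different route than the paper. Both arguments rest on the same foundation --- $\Phi x$ and $\normu{D^* x}$ are constant on $\Xl$, and no coordinate of $D^*x$ can change strict sign across $\Xl$ (Propositions~\ref{lem:same-image} and~\ref{prop:sign}) --- but you then linearize $\normu{D^*\cdot}$ by a global sign vector $s$, exhibit $\Xl$ as the explicit inequality-described polyhedron $P$, and read off $\rint\Xl$ in one stroke from the classical characterization of the relative interior of a polyhedron via implicit equalities, the point $\hat x$ witnessing that none of the constraints $s_i(D^*x)_i\geq 0$, $i\in I_+\cup I_-$, is implicit. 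The paper instead proves the two inclusions separately: for $\rint\Xl\subseteq\Sl$ it uses the vertex representation $\Xl=A\Delta_k$ and shows that each active coordinate of $\Gamma D^*A\bar z$ is a strictly positive combination of same-signed entries with at least one nonzero; for $\Sl\subseteq\rint\Xl$ it constructs an explicit ball $B_\infty(\maxx,r)\cap\aff\Xl$ of radius $r=(\alpha-\epsilon)/\norm{\Gamma D^*}_{\infty,\infty}$ and checks by hand that every point of it remains a solution. Your sign vector $s$ plays exactly the role of the matrix $\Gamma$ of Corollary~\ref{cor:diagpos}, and your prolongation argument $(1+\varepsilon)\hat x-\varepsilon x\in\Xl$ for the indices in $I_0$ replaces the barycentric computation in the paper's $(\Leftarrow)$ direction. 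What your approach buys is a unified treatment of both inclusions and a clean facial description of $\Xl$; what it costs is the appeal to the (standard but not trivial) implicit-equality theorem for polyhedra, which the paper in effect reproves from scratch in this special case, obtaining along the way an explicit radius around a maximal solution. Two points deserve an explicit word in a final write-up: the prolongation property of relative interior points ($\hat x\in\rint C$, $x\in C$ imply $(1+\varepsilon)\hat x-\varepsilon x\in C$ for some $\varepsilon>0$), which is the only place where membership of $\hat x$ in $\rint\Xl$ is actually used, and the idempotency $\rint(\rint\Xl)=\rint\Xl$ needed for the statement $\Sl=\rint\Sl$.
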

We recall that for any set $S$, the relative interior $\rint S$ of $S$ is
defined as its interior with respecto to the topology of the affine hull of $S$.

With this result in hand, we provide a way to construct such maximal solutions.
In \cref{sec:finding}, we show that with the help of a technical penalization using the so-called concave gauge ~\cite{barbara2015strict}, we can construct a path which converges to a point in the relative interior of $\Xl$, and more specifically, to the analytic center with respect to the chosen gauge.
We defer the precise statement to~\cref{sec:finding}.
\section{The Solution Set}
\label{sec:sol}

This section deals reviews some properties of the solution set $\Xl$.
The following proposition shows that even if $\Xl$ is not reduced to a singleton, its image by $\Phi$ or the analysis-$\lun$-norm is single-valued.
\begin{proposition}[Unique image]\label{lem:same-image}
  Let $x^1, x^2 \in \Xl$.
  Then,
  \begin{enumerate}
  \item they share the same image by $\Phi$, i.e., $\Phi x^1 = \Phi x^2$ ;
  \item they have the same analysis-$\lun$-norm, i.e., $\normu{D^* x^1} = \normu{D^* x^2}$.
  \end{enumerate}
\end{proposition}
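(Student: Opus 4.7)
The plan is to exploit the standard decomposition of the objective into a strictly convex data term composed with $\Phi$ plus a convex regularizer, and then use convexity of the minimizer set.

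First, I would write $h(x) = f(\Phi x) + \lambda g(x)$ where $f(z) = \tfrac{1}{2}\norm{y - z}_2^2$ is \emph{strictly} convex on $\RR^q$ and $g(x) = \normu{D^* x}$ is convex on $\RR^n$. Let $h^\star = h(x^1) = h(x^2)$ denote the common minimal value, and consider the midpoint $\bar x = \tfrac{1}{2}(x^1 + x^2)$. Since $\Xl$ is convex (as the argmin set of a convex function), $\bar x \in \Xl$, so $h(\bar x) = h^\star$.

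Second, combining convexity of $g$ with convexity of $f$ through the affine map $\Phi$ gives
\begin{equation*}
  h^\star = h(\bar x) \leq \tfrac{1}{2}\bigl(f(\Phi x^1) + f(\Phi x^2)\bigr) + \tfrac{\lambda}{2}\bigl(g(x^1) + g(x^2)\bigr) = h^\star,
\end{equation*}
so both inequalities are actually equalities. The equality in the $f$-part, together with the strict convexity of $f$, forces $\Phi x^1 = \Phi x^2$, which is the first claim.

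Third, the first claim immediately gives $f(\Phi x^1) = f(\Phi x^2)$, and since $h(x^1) = h(x^2)$ with $\lambda > 0$, subtracting yields $\normu{D^* x^1} = \normu{D^* x^2}$, which is the second claim.

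I do not foresee a real obstacle here: the only subtle point is invoking strict convexity of $f$ correctly, noting that it is $f$ itself that is strictly convex (not $f \circ \Phi$, which may fail to be when $\Phi$ has a nontrivial kernel), so one concludes equality of $\Phi x^1$ and $\Phi x^2$ rather than of $x^1$ and $x^2$. Observe also that the restricted injectivity hypothesis~\eqref{eq:hyp-inv} plays no role in this proposition; it will only be needed later to ensure boundedness of $\Xl$.
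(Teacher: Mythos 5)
Your proof is correct. Note that the paper does not prove this proposition itself; it simply points to a reference, so there is no in-paper argument to compare against. Your argument is the standard one: convexity of $\Xl$ places the midpoint in the solution set, equality throughout the convexity chain combined with strict convexity of $z \mapsto \tfrac{1}{2}\norm{y-z}_2^2$ forces $\Phi x^1 = \Phi x^2$, and then equality of objective values with $\lambda>0$ yields equality of the regularizer values. Your two side remarks are also accurate: strict convexity must be invoked for $f$ and not for $f\circ\Phi$, and the restricted injectivity hypothesis~\eqref{eq:hyp-inv} is not needed here. This is essentially the same convexity mechanism the paper itself uses in its proof of the sign-consistency proposition, so your write-up is fully in the spirit of the text.
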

A proof of this statement can be found for instance in~\cite{vaiter2011robust}.

It is known that standard $\ldeux$-regularization suffers from sign inconsistencies, i.e. two differents solutions can be of opposite signs at some indice.
The following proposition gives another important information: the cosign of two solutions cannot be opposite.
\begin{proposition}[Consistency of the sign]\label{prop:sign}
  Let $x^1, x^2 \in \Xl$.
  Then,
  \begin{equation*}
    \forall i \in \ens{1,\dots,p}, \quad u_i^1 u_i^2 \geq 0 ,
  \end{equation*}
  where $u^k = D^* x^k$ for $k=1,2$.
\end{proposition}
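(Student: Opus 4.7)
The plan is to exploit the convexity of the solution set together with the fact, established in \cref{lem:same-image}, that the analysis-$\ell^1$ norm is constant on $\Xl$. The sign consistency will drop out of the equality case in the coordinate-wise triangle inequality.

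First, I would note that $\Xl$ is convex (as the argmin set of a convex function), so for every $t \in [0,1]$ the convex combination $x^t = (1-t) x^1 + t x^2$ lies in $\Xl$. Setting $u^t = D^* x^t = (1-t) u^1 + t u^2$, \cref{lem:same-image} yields
\begin{equation*}
  \normu{u^t} = \normu{u^1} = \normu{u^2}.
\end{equation*}

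Next, I would write the equality as a sum of coordinate-wise inequalities:
\begin{equation*}
  \sum_{i=1}^p \abs{(1-t) u_i^1 + t u_i^2} = (1-t)\sum_{i=1}^p \abs{u_i^1} + t \sum_{i=1}^p \abs{u_i^2},
\end{equation*}
where the right-hand side dominates the left-hand side term by term thanks to the triangle inequality $\abs{(1-t) u_i^1 + t u_i^2} \leq (1-t)\abs{u_i^1} + t \abs{u_i^2}$. Since the global sums coincide and each term of the difference is nonnegative, every single coordinate inequality must be an equality.

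Finally, for $t \in (0,1)$, equality in $\abs{(1-t) u_i^1 + t u_i^2} = (1-t)\abs{u_i^1} + t \abs{u_i^2}$ forces $(1-t) u_i^1$ and $t u_i^2$ to have the same sign in the weak sense, i.e.\ $u_i^1 u_i^2 \geq 0$, which is the desired conclusion. The only subtle point is checking the equality case in the triangle inequality — it is classical, but worth stating cleanly because it is what turns the global identity on norms into the pointwise sign statement. No other obstacle is expected.
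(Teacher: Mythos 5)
Your proof is correct, but it is organized differently from the paper's. The paper argues by contradiction and stays self-contained: assuming some coordinate has opposite signs in $u^1$ and $u^2$, the strict inequality $\abs{u_i^1+u_i^2}/2 < (\abs{u_i^1}+\abs{u_i^2})/2$ combined with convexity of $x \mapsto \tfrac12\norm{y-\Phi x}_2^2$ shows that the midpoint $\tfrac12(x^1+x^2)$ has a strictly smaller objective value than the optimum, which is absurd. You instead take Proposition~\ref{lem:same-image} as a black box: since the analysis-$\ell^1$ norm is constant on the convex set $\Xl$, the global triangle inequality along the segment is saturated, hence each coordinate-wise triangle inequality is an equality, and the equality case $\abs{a+b}=\abs{a}+\abs{b} \Leftrightarrow ab\geq 0$ for reals delivers the sign condition. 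The underlying mechanism is the same in both cases --- strict subadditivity of $\abs{\cdot}$ exactly at pairs of opposite sign --- but your route is direct rather than by contradiction, and it trades self-containedness for brevity: the paper's version only uses the definition of $\Xl$ as an argmin, while yours leans on the unique-norm property (which the paper states without proof, citing the literature). Both are valid; yours also makes transparent the slightly stronger fact that sign consistency holds along the entire segment joining any two solutions.
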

\begin{proof}
  The proof of this statement follows closely the proof found in~\cite{attouch1999p} for $\lun$.
  Suppose there exists $i$ such that $u_i^1$ and $u_i^2$ have opposite signs.
  Then, one has
  \begin{equation}\label{eq:sign-strict-ineq}
    \frac{\abs{u_i^1 + u_i^2}}{2} < \frac{\abs{u_i^1} + \abs{u_i^2}}{2} .
  \end{equation}
  Let $z = u^1 + u^2$.
  Using the convexity of $x \mapsto \norm{y - \Phi x}_2^2$ and inequality~\eqref{eq:sign-strict-ineq}, we get that
  \begin{align*}
    \frac{1}{2} \norm{y - \Phi z}_2^2 + \normu{D^* z} 
    & \!<\! \frac{1}{2} \left( \left( \frac{1}{2} \norm{y - \Phi x^1}_2^2 + \normu{D^* x^1}  \right) \!+\! \left( \frac{1}{2} \norm{y - \Phi x^2}_2^2 + \normu{D^* x^2} \right) \right) \\
    & \!=\! \umin{x \in \RR^n} \frac{1}{2} \norm{y - \Phi x}_2^2 + \normu{D^* x} ,
  \end{align*}
  which is a contradiction.
\end{proof}

Condition~\cref{eq:hyp-inv} (we recall that all through this paper, we suppose this condition holds) ensures that $\Xl$ is a non-empty, convex and compact set.
 Recall for all the following that given a lower semicontinuous real-valued extended convex function $h$ on $\RR^l$, its recession function can be defined by (Theorem 8.5 of \cite{rockafellar})
$$h_\infty(d)=\lim\limits_{\lambda\uparrow+\infty}\displaystyle{h(z+\lambda d)-h(z)\over\lambda},\ \forall (z,d)\in \dom(h)\times\RR^{l}.$$
In fact, as stated by the following proposition, the solution set $\Xl$ is a polytope.
\begin{proposition}\label{prop:polysol}
  $\Xl$ is a polytope (i.e. a bounded polyhedron).
\end{proposition}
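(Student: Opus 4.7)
The plan is to combine \cref{lem:same-image} with the standard piecewise-linear description of the $\lun$-ball. Since the hypothesis~\eqref{eq:hyp-inv} already yields that $\Xl$ is non-empty, convex and compact, it suffices to exhibit $\Xl$ as the intersection of finitely many closed halfspaces; boundedness then upgrades ``polyhedron'' to ``polytope''.

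First, I would fix any reference solution $\bar x \in \Xl$ and set $z := \Phi \bar x$ and $\alpha := \normu{D^* \bar x}$; by \cref{lem:same-image} neither quantity depends on the choice of $\bar x$. The main step is then to establish the identity
\begin{equation*}
  \Xl = \enscond{x \in \RR^n}{\Phi x = z \text{ and } \normu{D^* x} \leq \alpha}.
\end{equation*}
The inclusion ``$\subseteq$'' is a direct reading of \cref{lem:same-image}. For ``$\supseteq$'', if $x$ satisfies the two constraints on the right, then
$h(x) = \tfrac{1}{2}\norm{y - z}_2^2 + \lambda \normu{D^* x} \leq \tfrac{1}{2}\norm{y - z}_2^2 + \lambda \alpha = h(\bar x) = \min h$, so $x \in \Xl$.

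Given this characterization, polyhedrality is immediate: the affine condition $\Phi x = z$ contributes polyhedral constraints, and the inequality $\normu{D^* x} \leq \alpha$ is equivalent to the finite family of linear inequalities $\langle \epsilon, D^* x \rangle \leq \alpha$ as $\epsilon$ ranges over $\ens{-1,+1}^p$, i.e.\ at most $2^p$ linear constraints in $x$. Intersecting these two polyhedral descriptions, and combining with the compactness recalled above, exhibits $\Xl$ as a polytope.

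There is essentially no sharp obstacle in this plan; the only conceptual ingredient is the sublevel-set characterization of $\Xl$, which is an immediate consequence of \cref{lem:same-image}. The recession-function machinery cited just before the statement offers an alternative route, observing that the argmin set of the convex piecewise-quadratic function $h$ is polyhedral by general principles applied to $h_{\infty}$, but the direct argument via \cref{lem:same-image} is both shorter and more informative for the relative-interior analysis carried out later in \cref{sec:max}.
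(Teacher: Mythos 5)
Your proof is correct and follows essentially the same route as the paper: fix a reference solution $\bar x$, use \cref{lem:same-image} to identify $\Xl$ with the set of points sharing the image $\Phi\bar x$ and the analysis norm $\normu{D^*\bar x}$, and combine polyhedrality with the boundedness supplied by~\eqref{eq:hyp-inv}. The one (welcome) refinement is that you describe $\Xl$ via the sublevel constraint $\normu{D^*x}\leq\alpha$ and spell out the $2^p$ halfspaces, which makes the polyhedrality explicit, whereas the paper states the characterization with equality $\normu{D^*x}=\normu{D^*\bar x}$ and asserts polyhedrality directly.
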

\begin{proof}
  Let us first prove that $\Xl$ is a non-empty, convex and compact set. 
It follows with the help of hypothesis~\eqref{eq:hyp-inv} that $\{d:\ h_\infty(d)\leq0\}=\{0\}$.
  Hence, $\Xl$ is bounded.

  We shall now prove that $\Xl$ is a polytope.
  Let $\bar{x} \in \Xl$.
  According to Proposition~\ref{lem:same-image}, we have
  \begin{equation*}
    \Xl \subseteq
    \enscond{x \in \RR^n}{\normu{D^* x} = \normu{D^* \bar{x}}} \cap
    \enscond{x \in \RR^n}{\Phi x = \Phi \bar{x}} .
  \end{equation*}
  The reverse inclusion came from the fact that if $x$ shares the same image by $\Phi$ as $\bar{x}$ and the same analysis-$\lun$-norm, then the objective function at $x$ is equal to the one at $\bar{x}$, hence is also a solution.
  Thus,
  \begin{equation*}
    \Xl =
    \enscond{x \in \RR^n}{\normu{D^* x} = \normu{D^* \bar{x}}} \cap
    \enscond{x \in \RR^n}{\Phi x = \Phi \bar{x}} .
  \end{equation*}
  Hence, $\Xl$ is a polyhedron. Since $\Xl$ is a bounded set, it is also a polytope.
\end{proof}

Owing to Proposition~\ref{prop:polysol}, we can rewrite the set $\Xl$ as the convex hull of $k$ points in $\RR^n$ as
\begin{equation*}
  \Xl = \conv{a_1, \dots, a_k} ,
\end{equation*}
where $a_i$ are the extremal points of $\Xl$.
Observe that each $a_i$ lives on the boundary of the analysis-$\lun$-ball of radius $\normu{D^* \bar{x}}$.
Naturally, we can even rewrite the solution as
\begin{equation*}
  \Xl = A \Delta_k = \enscond{A z}{z \in \Delta_k} ,
\end{equation*}
where $A$ is a matrix $n \times k$ such that its columns are the vectors
$a_i$ and the $n$-simplex $\Delta_n$ of $\RR^n$ is defined as
\begin{equation*}
  \Delta_n = \enscond{x \in \RR^n}{\sum_{i=1}^n x_i = 1 \qandq \forall i, x_i \geq 0} = \conv{e_1,\dots,e_n} ,
\end{equation*}
where $(e_1,\dots,e_n)$ is the canonical basis of $\RR^n$.
Since $a_i$ are the extremal points of $\Xl$, notice that $A$ has maximal rank.
Observe in particular that the lines of the matrix $D^* A$ have same signs according to Proposition~\ref{prop:sign}.
\section{Maximal support and proof of \cref{thm:maximal-characterization}}
\label{sec:max}

We recall that a vector $\maxx \in \RR^n$ is a solution of maximal $D$-support if $\maxx$ is a solution, i.e., $\maxx \in \Xl$ such that for every $x \in \Xl, \normz{D^* x} \leq \normz{D^* \maxx}$.
The following proposition proves that \emph{the} $D$-maximal support is indeed
uniquely defined.
\begin{proposition}
  Let $x \in \Xl$.
  Then the two following propositions are equivalent.
  \begin{enumerate}
  \item $x$ is a solution of maximal $D$-support, i.e. $x \in \Sl$.
  \item For any $\bar{x} \in \Xl$, $\supp(D^* \bar{x}) \subseteq \supp(D^* x)$.
  \end{enumerate}
\end{proposition}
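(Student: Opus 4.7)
The direction (2)$\Rightarrow$(1) is immediate: if every $\bar{x}\in\Xl$ satisfies $\supp(D^*\bar{x})\subseteq\supp(D^*x)$, then in particular $\normz{D^*\bar{x}}\leq\normz{D^*x}$, so $x\in\Sl$.

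For (1)$\Rightarrow$(2), the plan is to argue by contradiction. Suppose $x\in\Sl$ but there exist $\bar{x}\in\Xl$ and an index $i\in\ens{1,\dots,p}$ with $(D^*\bar{x})_i\neq 0$ and $(D^*x)_i=0$. The idea is to exhibit a convex combination whose $D$-support strictly contains $\supp(D^*x)$, contradicting maximality. Concretely, I would set $z_t = (1-t)x + t\bar{x}$ for $t\in(0,1)$; by convexity of $\Xl$ (which is a polytope by \cref{prop:polysol}), $z_t\in\Xl$.

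The key point is then to show that $\supp(D^*z_t)\supseteq\supp(D^*x)\cup\ens{i}$, which gives $\normz{D^*z_t}>\normz{D^*x}$, contradicting $x\in\Sl$. For the new index $i$, one has $(D^*z_t)_i = t(D^*\bar{x})_i\neq 0$, so $i\in\supp(D^*z_t)$. For $j\in\supp(D^*x)$, one needs to rule out cancellation in $(D^*z_t)_j = (1-t)(D^*x)_j + t(D^*\bar{x})_j$, and this is where sign consistency (\cref{prop:sign}) enters: since $(D^*x)_j$ and $(D^*\bar{x})_j$ have the same sign (possibly one being zero), the two summands cannot cancel, and since $(1-t)(D^*x)_j\neq 0$ the sum is nonzero.

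The only mildly delicate step is the application of \cref{prop:sign} to guarantee non-cancellation; everything else is routine convex combination manipulation. No appeal to the structure of the polytope beyond convexity is required.
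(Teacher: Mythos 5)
Your proposal is correct and follows essentially the same route as the paper: both directions are handled identically, with $(1)\Rightarrow(2)$ argued by contradiction via a convex combination of $x$ and $\bar{x}$ whose $D$-support strictly contains $\supp(D^*x)$ thanks to the sign consistency of Proposition~\ref{prop:sign}. The only (harmless) difference is that you take a general $t\in(0,1)$ where the paper fixes $t=\tfrac{1}{2}$, and you spell out the non-cancellation step that the paper leaves implicit.
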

\begin{proof}
  The two directions are proved separately.\\
  $(1) \Rightarrow (2)$.
  Suppose there exists $i_0 \in \ens{1,\dots,p}$ such that $i_0 \in \supp(D^* \bar{x})$ and $i_0 \not\in \supp(D^* x)$.
  Observe that $\tilde x = \frac{1}{2}(\bar{x} + x)$ is also an element of $\Xl$ by convexity of $\Xl$.
  Using Proposition~\ref{prop:sign}, we get that $\supp(D^* \tilde x) \supseteq \supp(D^* \bar{x}) \cup \supp(D^* x)$.
  In particular, $\supp(D^* \tilde x) \supseteq \supp(D^* x) \cup \ens{i_0} \supsetneq \supp(D^* x)$.
  Hence, $\abs{\supp(D^* \tilde x)} > \abs{\supp(D^* x)}$ which contradicts the fact that $x$ has maximal $D$-support.\\
  $(2) \Rightarrow (1)$.
  Taking the cardinal in the property $\forall \bar{x} \in \Xl$, $\supp(D^* \bar{x}) \subseteq \supp(D^* x)$ is sufficient.
\end{proof}
In particular, two solutions of maximal support share the same $D$-support. Notice that in this case, the sign vectors are also the same.

We start by a technical Corollary of Proposition~\ref{prop:sign} which will be convenient in the following.
\begin{corollary}\label{cor:diagpos}
  There exists an integer $m \in \NN$, a matrix $\Lambda = \diag(\lambda_i)_{i=1,\dots,p}$ with $\lambda_i \in \ens{-1,1}$ for $i \in \ens{1,\dots,m}$ and $\lambda_i = 0$ for $i \in \ens{m+1,\dots,p}$, and a permutation matrix $\Sigma$ such that for $\Gamma = \Lambda \Sigma$, one has
  \begin{equation*}
    \Gamma D^* \Xl \subset (\RR_+)^m \times \ens{0}^{p-m} .
  \end{equation*}
  Moreover, for all $x \in \Xl$, $\normu{\Gamma D^* x} = \normu{D^* x}$.
\end{corollary}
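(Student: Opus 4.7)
The plan is to use Proposition~\ref{prop:sign} to assign to each coordinate $i\in\ens{1,\dots,p}$ a well-defined sign determined by the set $\Xl$, then build $\Sigma$ and $\Lambda$ accordingly.

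First, for each $i\in\ens{1,\dots,p}$, I would set $s_i=+1$ if there is some $x\in\Xl$ with $(D^*x)_i>0$, $s_i=-1$ if there is some $x\in\Xl$ with $(D^*x)_i<0$, and $s_i=0$ otherwise. By Proposition~\ref{prop:sign}, no coordinate can realize both a strictly positive and a strictly negative value across $\Xl$, so $s_i$ is unambiguously defined. Let $I_{\pm}=\enscond{i}{s_i\neq 0}$, $I_0=\enscond{i}{s_i=0}$, and $m=\card(I_{\pm})$.

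Next I would pick any permutation $\sigma$ of $\ens{1,\dots,p}$ that sends $\ens{1,\dots,m\}$ bijectively onto $I_{\pm}$ and $\ens{m+1,\dots,p}$ bijectively onto $I_0$, and let $\Sigma$ be the associated permutation matrix, i.e.\ $(\Sigma v)_j = v_{\sigma(j)}$. I then define $\Lambda=\diag(\lambda_j)$ by $\lambda_j = s_{\sigma(j)}$ for $j\leq m$ (so $\lambda_j\in\ens{-1,1}$) and $\lambda_j=0$ for $j>m$. Setting $\Gamma=\Lambda\Sigma$, for any $x\in\Xl$ and any $j\leq m$ we have $(\Gamma D^*x)_j=s_{\sigma(j)}(D^*x)_{\sigma(j)}\geq 0$ by the definition of $s_{\sigma(j)}$, while for $j>m$, $\sigma(j)\in I_0$ gives $(D^*x)_{\sigma(j)}=0$, so $(\Gamma D^*x)_j=0$. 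This yields the inclusion $\Gamma D^*\Xl\subset(\RR_+)^m\times\ens{0}^{p-m}$.

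For the equality of norms, I compute
\begin{equation*}
\normu{\Gamma D^*x}=\sum_{j=1}^m|\lambda_j(D^*x)_{\sigma(j)}|=\sum_{j=1}^m|(D^*x)_{\sigma(j)}|=\sum_{i\in I_{\pm}}|(D^*x)_i|,
\end{equation*}
since each $|\lambda_j|=1$ and $\sigma$ bijects $\ens{1,\dots,m\}$ onto $I_{\pm}$. On the other hand $\normu{D^*x}=\sum_{i\in I_{\pm}}|(D^*x)_i|+\sum_{i\in I_0}|(D^*x)_i|$, and the second sum vanishes because $(D^*x)_i=0$ whenever $i\in I_0$ and $x\in\Xl$. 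Hence $\normu{\Gamma D^*x}=\normu{D^*x}$, concluding the proof.

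There is no real obstacle here: the only substantive point is checking that the trichotomy $s_i\in\ens{-1,0,1}$ is consistent across $\Xl$, which is exactly the content of Proposition~\ref{prop:sign}. Everything else is a bookkeeping argument with the permutation and the sign diagonal.
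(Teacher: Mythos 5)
Your proof is correct and follows essentially the same route as the paper: both arguments reduce entirely to Proposition~\ref{prop:sign} and then perform the bookkeeping with the permutation and the sign diagonal. The only (harmless) difference is that the paper anchors the sign pattern and the index set to a single maximal-$D$-support solution $\maxx \in \Sl$, whereas you define the signs coordinate-wise from the union of supports over all of $\Xl$ — these coincide, and your variant has the small advantage of making the norm identity $\normu{\Gamma D^* x} = \normu{D^* x}$ explicit, a point the paper's proof leaves implicit.
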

\begin{proof}
  Let $\maxx$ an element of $\Sl$.
  Consider $I = \supp(D^* \maxx)$, $J = I^c$ and $m = \abs{I}$.
  Let $\Sigma$ be the permutation matrix associated to any permutation $\sigma$ which sends $I$ to $\ens{1,\dots,m}$.
  Define the matrix $\Lambda$ by its diagonal as
  \begin{equation*}
    \lambda_{\sigma(i)} = 
    \begin{cases}
      1 & \text{if } (D^* \maxx)_{\sigma(i)} > 0 \\
      -1 & \text{if } (D^* \maxx)_{\sigma(i)} < 0 \\
      0 & \text{if } (D^* \maxx)_{\sigma(i)} = 0 .
    \end{cases}
  \end{equation*}

  Now take any solution $x \in \Xl$ and consider the vector $u = \Gamma D^* x$.
  Let $i \in \ens{1,\dots,m}$, then
  \begin{equation*}
    u_i = \dotp{e_i}{\Lambda \Sigma D^* x} .
  \end{equation*}
  Since $\Lambda$ is self-adjoint, one has
  \begin{equation*}
    u_i = \dotp{\Lambda e_i}{\Sigma D^* x} .
  \end{equation*}
  Since $\Lambda$ is a diagonal matrix, we get that
  \begin{equation*}
    u_i = \lambda_i \dotp{e_i}{\Sigma D^* x} .
  \end{equation*}
  Now, since $\Sigma$ is a permutation matrix, we have that $\Sigma^* = \Sigma^{-1}$, i.e.
  \begin{equation*}
    u_i = \lambda_i \dotp{\Sigma^{-1} e_i}{D^* x} .
  \end{equation*}
  Using the permutation $\sigma$ associated to $\Sigma$, we have that
  \begin{equation*}
    u_i = \lambda_i \dotp{e_{\sigma^{-1}(i)}}{D^* x} ,
  \end{equation*}
  which can be rewritten as
  \begin{equation*}
    u_i = \lambda_i \dotp{d_{\sigma^{-1}(i)}}{x} .
  \end{equation*}  
  According to Proposition~\ref{prop:sign}, one have $(D^* x)_{\sigma^{-1}(i)} (D^* \maxx)_{\sigma^{-1}(i)} \geq 0$.
  Moreover, $\lambda_i = \lambda_{\sigma(\sigma^{-1}(i))}$ has the same sign than $(D^* \maxx)_{\sigma^{-1}(i)}$.
  Thus, $u_i = \lambda_i \dotp{d_{\sigma^{-1}(i)}}{x} \geq 0$.

  For $i \in \ens{m+1,\dots,p}$, we have that
  \begin{equation*}
    u_i = \lambda_i \dotp{e_i}{\Sigma D^* x} = 0,
  \end{equation*}
  since $\lambda_i = 0$.
\end{proof}
Note that the matrix $\Lambda$ and $\Sigma$ are not uniquely defined. Corollary~\ref{cor:diagpos} allows us to work only on positive vectors in dimension $m$.

We will also need to exclude at some point the case where a solution $x$ lives in the kernel of $D^*$.
The following lemma shows that if this is the case, then the solution set is reduced to a singleton $\Xl = \ens{x}$.
\begin{lemma}\label{lem:kernel-one-image}
  If there exists $x \in \Ker D^* \cap \Xl$, then $\Xl = \ens{x}$ .
\end{lemma}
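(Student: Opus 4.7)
The plan is to leverage Proposition~\ref{lem:same-image} (which states that all elements of $\Xl$ share the same image under $\Phi$ and the same analysis-$\lun$-norm) together with the restricted injectivity hypothesis~\eqref{eq:hyp-inv}, which is the only genuinely non-trivial ingredient needed.

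First, I would pick an arbitrary $x' \in \Xl$ and compare it with the given $x \in \Ker D^* \cap \Xl$. By Proposition~\ref{lem:same-image}, one has $\normu{D^* x'} = \normu{D^* x}$ and $\Phi x' = \Phi x$. Since $D^* x = 0$ by hypothesis, the first equality forces $\normu{D^* x'} = 0$, hence $D^* x' = 0$ (the $\lun$-norm being a genuine norm on $\RR^p$).

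Next, I would look at the difference $x - x'$. On the one hand $D^*(x - x') = 0$, so $x - x' \in \Ker D^*$; on the other hand $\Phi(x - x') = 0$, so $x - x' \in \Ker \Phi$. Applying the standing hypothesis~\eqref{eq:hyp-inv}, namely $\Ker D^* \cap \Ker \Phi = \{0\}$, yields $x = x'$. Since $x' \in \Xl$ was arbitrary, this gives $\Xl = \{x\}$.

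There is no real obstacle here: the whole argument is a two-line consequence of Proposition~\ref{lem:same-image} combined with~\eqref{eq:hyp-inv}. The only subtle point worth emphasising in the write-up is that one uses both parts of Proposition~\ref{lem:same-image} simultaneously (the norm equality to upgrade $D^* x = 0$ to $D^* x' = 0$, and the image equality to place $x - x'$ in $\Ker \Phi$), so that the full strength of the restricted injectivity hypothesis can be applied.
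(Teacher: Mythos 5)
Your proof is correct and follows essentially the same route as the paper's: both use Proposition~\ref{lem:same-image} to force $\normu{D^* x'} = 0$ for any other solution $x'$, place the difference $x - x'$ in $\Ker D^* \cap \Ker \Phi$, and conclude via hypothesis~\eqref{eq:hyp-inv}. The only cosmetic difference is that the paper writes $\bar x = x + h$ with $h \in \Ker\Phi$ up front, whereas you form the difference at the end.
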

\begin{proof}
  We recall that $\Xl \subset x + \Ker \Phi$.
  Let $\bar x \in \Xl$, and rewrite it as $\bar x = x + h$ where $h \in \Ker \Phi$.
  Then, according to Proposition~\ref{lem:same-image}, one has $\normu{D^* \bar x} = \normu{D^* x} = 0$.
  In particular, $\normu{D^* \bar x} = \normu{D^* x + D^* h} =  \normu{D^* h} = 0$.
  Using hypothesis~\eqref{eq:hyp-inv}, we get that $h = 0$.
\end{proof}

We can now provide the proof of Theorem~\ref{thm:maximal-characterization}.
\begin{proof}[Proof of Theorem~\ref{thm:maximal-characterization}]
  We exclude here the case where $\Xl$ is reduced to a singleton, since the result is then trivially verified.
  Let us prove both direction separately.

  $(\Leftarrow: \rint \Xl \subseteq \Sl)$.
  First, we recall that $\rint \Xl = \rint (A \Delta_k) = A \rint \Delta_k$.
  Let $\bar{x} \in \rint \Xl$.
  We have
  \begin{equation*}
    \bar{x} = A \bar{z} \qwithq \sum_{i=1}^k \bar{z}_i = 1 \qandq \bar{z}_i > 0.
  \end{equation*}
  For $i \in \ens{1,\dots,m}$, one has
  \begin{equation*}
    (\Gamma D^* \bar{x})_i = (\Gamma D^* A \bar{z})_i = \dotp{e_i}{\Gamma D^* A \bar{z}} =  \dotp{e_i}{\Lambda \Sigma D^* A \bar{z}}.
  \end{equation*}
  Using the fact that $\Lambda$ is a diagonal matrix and $\Sigma$ is a permutation matrix, we have that
  \begin{equation*}
    (\Gamma D^* \bar{x})_i = \lambda_i \dotp{D \Sigma^{-1} e_i}{A \bar{z}} ,
  \end{equation*}
  which can be rewritten, using the fact that $\Sigma^{-1} e_i = e_{\sigma^{-1}(i)}$ where $\sigma$ is the permutation associated to $\Sigma$, as
  \begin{equation*}
    (\Gamma D^* \bar{x})_i = \lambda_i \dotp{d_{\sigma^{-1}(i)}}{A \bar{z}} .
  \end{equation*}
  Now, one can rewrite it as
  \begin{equation*}
    (\Gamma D^* \bar{x})_i = \lambda_i \dotp{(D^* A)^* e_{\sigma^{-1}(i)}}{\bar{z}} .
  \end{equation*}
  Since for any $i$, $\bar{z}_i > 0$ and, according to Proposition~\ref{prop:sign}, there exists $j_0$ such that $((D^* A)^* e_{\sigma^{-1}(i)})_{j_0} > 0$, one concludes that $(\Gamma D^* \bar{x})_i \neq 0$.

  $(\Rightarrow: \Sl \subseteq \rint \Xl)$.
  We are going to prove that $\Sl = \rint \Sl$.
  Indeed, according to $(\Leftarrow)$, $\rint \Xl \subseteq \Sl$.
  Moreover, since every element of $\Sl$ is also an element of $\Xl$, we have $\rint \Xl \subseteq \Sl \subseteq \Xl$.
  In particular, $\aff \Xl = \aff \Sl$.
  Let
  \begin{equation*}
    \alpha = \min_{i \in \supp(D^* \maxx)} \abs{(D^* \maxx)_i} = \min_{i \in \ens{1,\dots,m}} (\Gamma D^* \maxx)_i
  \end{equation*}
  where $\maxx$ is an element of $\Sl$.
  Note that according to Lemma~\ref{lem:kernel-one-image}, since $\Xl$ is not reduced to a singleton, then $\supp(D^* \maxx)$ has cardinal greater than 1, hence $\alpha > 0$.

  Now take any $u \in B_\infty(\maxx, r) \cap \aff \Xl$ where
  \begin{equation*}
    r = \frac{\alpha - \epsilon}{\norm{\Gamma D^*}_{\infty,\infty}},
  \end{equation*}
  and $0 < \epsilon < \alpha$.

  Let's prove first that $\Gamma D^* u \in (\RR_+^*)^m \times \ens{0}^{p-m}$.
  From the definition of $u$, we get that
  \begin{equation*}
    \normi{\Gamma D^* u - \Gamma D^* x} \leq \norm{\Gamma D^*}_{\infty,\infty} \normi{u - x} \leq \alpha - \epsilon .
  \end{equation*}
  For $i \in \ens{1,\dots,m}$, one has $\abs{(\Gamma D^* u)_i - (\Gamma D^* x)_i} \leq \alpha - \epsilon$. In particular one has
  \begin{equation*}
    (\Gamma D^* u)_i - (\Gamma D^* x)_i \geq -\alpha + \epsilon \Leftrightarrow (\Gamma D^* u)_i \geq (\Gamma D^* x)_i - \alpha + \epsilon .
  \end{equation*}
  Since $(\Gamma D^* x)_i - \alpha \geq 0$ and $\epsilon > 0$, we conclude that $(\Gamma D^* u)_i > 0$.
  Thus, $(\Gamma D^* u)_i > 0$ for $i \in \ens{1,\dots,m}$ and $(\Gamma D^* u)_i = 0$ for $i \not\in \ens{1,\dots,m}$.

  It remains to prove that $u$ is a solution of~\eqref{eq:p}, i.e. $u \in \Xl$.
  Since $u \in \aff \Xl$, there exists $t \in \RR$ and $x \in \Xl$ such that
  \begin{equation*}
    u = \maxx + t (x - \maxx) .
  \end{equation*}
  From this equality, we get that
  \begin{align*}
    \normu{D^* u} &= \normu{\Gamma D^* u} = \sum_{i=1}^p (\Gamma D^* u)_i && \text{according to Corollary~\ref{cor:diagpos}}  \\
                  &= \sum_{i=1}^p (1-t) (\Gamma D^* \maxx)_i + t (\Gamma D^* x)_i && \\
                  &= (1-t) \normu{\Gamma D^* \maxx} + t \normu{\Gamma D^* x} && \\
                  &= \normu{D^* \maxx} && \text{since } \normu{D^* \maxx} = \normu{D^* x} .
  \end{align*}
  Moreover, $\Phi u = \Phi \maxx + t(\Phi x - \Phi \maxx) = \Phi \maxx$.
  Thus, $u$ is a solution which concludes our proof.
\end{proof}

\section{Finding a Maximal Solution}
\label{sec:finding}

Using the classical barrier function, in this section we show how to get a path that converges to a relative interior point of $\Xl$, which turns out to be the analytic center of $\Xl$.

Setting $Q = \Phi^* \Phi$ is the Gram matrix and $c = \Phi^* y$, we start by rewriting our initial problem~\cref{eq:p} as an augmented quadratic program under constraints, i.e.
\begin{equation*}
  \umin{x \in \RR^n, t \in \RR^p}
  \frac{1}{2} \dotp{Qx}{x} - \dotp{c}{x} + \lambda \sum_{i=1}^p t_i
  \qsubjq
  \begin{cases}
    -t \leq D^* x \leq t &\\
    t_i \geq 0 &\\
  \end{cases} ,
\end{equation*}
witch also can be rewritten as
\begin{equation*}
  \umin{x \in \RR^n, t \in \RR^p}
  \frac{1}{2} \dotp{Qx}{x} - \dotp{c}{x} + \lambda \sum_{i=1}^p t_i
  \qsubjq
  \begin{cases}
  -t+s=D^*x&\\
  t-s'=D^*x&\\
    t_i \geq 0,\ s_i\geq0,\ s'_i\geq0 &\\
  \end{cases}.
\end{equation*}
Now observe that $t=\displaystyle{1\over 2}(s+s')$. Then setting $z=\displaystyle{1\over 2}\left(\begin{array}{l}s\cr s'\end{array}\right)$, $I_p$ the $p$ by $p$ identity matrix, ${\tilde I}=\left(\begin{array}{lr}I_p& -I_p\end{array}\right)$ and $e=(1, \cdots,1)\in\RR^{2p}$, we come to the following equivalent formulation of the problem
\begin{equation}
\label{eq:paug}
\umin{x\in\RR^n, z\in\RR^{2p}}
f(x,z)
\qsubjq z\in[0,+\infty)^{2p}
\end{equation}
where $$
f(x,z)=\left\{\begin{array}{ll}\displaystyle{1\over2}\dotp{Qx}{x} - \dotp{c}{x} +\lambda\dotp{e}{z}&\mbox{ if }D^*x+{\tilde I}z=0\\
+\infty&\mbox{ elsewhere,}\end{array}\right.$$ 
or equivalently 
$$f(x,z)=\left\{\begin{array}{ll}\displaystyle{1\over2}\|\Phi x-y\|^2-\displaystyle{1\over2}\|y\|^2 +\lambda\dotp{e}{z}&\mbox{ if }D^*x+{\tilde I}z=0\\
+\infty&\mbox{ elsewhere.}\end{array}\right.$$

Its classical dual is
\begin{equation}
\label{eq:daug}
\umax{x\in\RR^n, s\in\RR^{2p}, u\in\RR^p}
g(x,s,u)
\qsubjq s\in[0,+\infty)^{2p}
\end{equation}
where 
$$g(x,s,u)=\left\{\begin{array}{ll}
-\displaystyle{1\over 2}\langle Qx,x\rangle&\mbox{if }{D} u+c-Qx=0,\ s=\lambda e-{\tilde I}^*u\cr 
-\infty&\mbox{elsewhere.}\end{array}\right.$$ 
We set $S_{(P)}$ (resp. $S_{(D)}$) the optimal solutions' set of
problem~\cref{eq:paug} (resp. problem~\cref{eq:daug}).
We know that $\Xl$ is non-empty and so $S_{(P)}$.
Since, in addition~\cref{eq:paug} is a convex problem with polyedral constraints, $S_{(D)}$ is non empty and there is no duality gap. We denote by $\alpha$ the optimal value of the two problems. 

\begin{proposition}\label{dcompacity}{$ $}

\begin{itemize}
\item[1.]The optimal solution $S_{(P)}$ of the problem (\ref{eq:paug}) is bounded or equivalently the set $\{(d_x,d_z):\ f_\infty(d_x,d_z)\leq0,\ d_z\geq0\}=\{0\}$,
\item[2.]$S(.,(D))=\{(s,u):\ \exists x\in\RR^n\mbox{ such that }(x,s,u)\in S_{(D)}\}$ is bounded, in other words, the dual feasible solutions' set is bounded in $(s,u)$.
\end{itemize}
\end{proposition}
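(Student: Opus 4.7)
The plan is to handle the two items separately: item 1 via the recession function of $f$ together with hypothesis~\eqref{eq:hyp-inv}, and item 2 by direct inspection of the dual feasibility constraints.

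For item 1, I first compute $f_\infty$ explicitly. The function $f$ is the sum of the convex quadratic $(x,z)\mapsto \tfrac{1}{2}\norm{\Phi x - y}^2 - \tfrac{1}{2}\norm{y}^2 + \lambda \dotp{e}{z}$ and the indicator of the affine subspace $\{(x,z):D^*x + \tilde{I}z = 0\}$. Using the standard facts that the recession function of $x \mapsto \tfrac{1}{2}\norm{Ax-b}^2$ equals $0$ on $\Ker A$ and $+\infty$ elsewhere, that the recession of a linear functional is itself, and that the recession of the indicator of an affine space is the indicator of its direction, summing these contributions yields
\[
  f_\infty(d_x, d_z) = \begin{cases} \lambda \dotp{e}{d_z} & \text{if } \Phi d_x = 0 \text{ and } D^* d_x + \tilde{I} d_z = 0, \\ +\infty & \text{otherwise.} \end{cases}
\]
I then verify that $\{(d_x,d_z) : f_\infty(d_x, d_z) \leq 0,\ d_z \geq 0\} = \{0\}$. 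Indeed, for such $(d_x,d_z)$, the inequality $\lambda \dotp{e}{d_z} \leq 0$ with $\lambda > 0$ and $d_z \geq 0$ forces $d_z = 0$; the constraint $D^* d_x + \tilde I d_z = 0$ then gives $D^* d_x = 0$; combined with $\Phi d_x = 0$ and hypothesis~\eqref{eq:hyp-inv} this yields $d_x = 0$. Boundedness of $S_{(P)}$ is then a standard consequence of the emptiness of non-trivial recession directions for the objective (Theorem 8.5 of \cite{rockafellar}), used exactly as in the proof of Proposition~\ref{prop:polysol} for $\Xl$.

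For item 2, only dual feasibility is needed. Any $(s,u)$ in the image of $S_{(D)}$ satisfies $s = \lambda e - \tilde{I}^* u$ and $s \geq 0$. Writing $\tilde{I}^* u \in \RR^{2p}$ as the stack with first $p$ components equal to $u$ and last $p$ components equal to $-u$, the inequality $s \geq 0$ reads $u_i \leq \lambda$ and $-u_i \leq \lambda$ for every $i \in \ens{1,\dots,p}$, i.e.\ $\normi{u} \leq \lambda$. Substituting back into $s = \lambda e - \tilde{I}^* u$ gives a uniform bound on $s$ as well, which proves the claim.

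The only mildly delicate step is the book-keeping involved in writing $f_\infty$ down; once it is in place, both items reduce to a one-line argument combining the affine constraints with hypothesis~\eqref{eq:hyp-inv}, and item 2 in particular needs nothing beyond inspection of the $s\geq 0$ constraint.
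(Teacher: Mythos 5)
Your proposal is correct and follows essentially the same route as the paper: item 2 is verbatim the paper's argument (reading $\normi{u}\leq\lambda$ off the constraint $s=\lambda e-\tilde{I}^*u\geq0$), and item 1 is the recession-function argument via hypothesis~\eqref{eq:hyp-inv} that the paper merely asserts is ``not difficult to show.'' Your explicit computation of $f_\infty$ supplies exactly the details the paper omits, so there is nothing to correct.
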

\begin{proof}
1. Because of relation (\ref{eq:hyp-inv}) it is not difficult to show that the optimal solution $S_{(P)}$ of the problem (\ref{eq:paug}) is bounded.

2. Let $(x^k,s^k,u^k)$ be a sequence of the dual feasible solutions' set. We have $s^k=\lambda e-{\tilde I}^*u=\left(\begin{array}{l}\lambda e^p\cr\lambda e^p\end{array}\right)-\left(\begin{array}{l}u^k\cr-u^k\end{array}\right)\geq0$, where $e^p=(1,\cdots 1)\in\RR^p$. It follows that $-\lambda e^p\leq u^k\leq \lambda e^p$. Hence $(u^k)$ and then $(s^k)$, is bounded.
\end{proof}

Using the classical logarithmic barrier function introduced by Frish~\cite{frisch}, we deal with the family of problems $(P_\mu)_{\mu>0}$ given by
\begin{equation*}
\theta(\mu)=\umin{x\in\RR^n, z\in\RR^{2p}}
F_{\mu}(x,z)=f(x,z)+\zeta(z,\mu)
\end{equation*}
where $$\zeta(z,\mu)=\left\{\begin{array}{ll}
\mu \xi\left(z/\mu\right)&\mbox{if }\mu>0,\cr\xi_\infty(z)&\mbox{if }\mu=0,\cr+\infty&\mbox{elsewhere,}
\end{array}\right.$$ $$ \xi(z)=\left\{\begin{array}{ll}-\ln \varphi(z)&\mbox{if }\varphi(z)>0,\cr+\infty&\mbox{elsewhere,}\end{array}\right. \mbox{ and }\varphi(z)=\left\{\begin{array}{ll}\left(\prod\limits_{i=1}^{2p}z_i\right)^{1\over2p}&\mbox{if }z\geq0,\cr-\infty&\mbox{elsewhere.}\end{array}\right.$$

Note that the function $\varphi$ is strictly quasiconcave and then according to Lemma 1 of \cite{barbara2015strict}, for every $\mu>0$, the function $\zeta_\mu:z\mapsto\zeta(z,\mu)$ is strictly convex on $(0,+\infty)^{2p}$. 
\begin{proposition}\label{strict_convexity}
For every $\mu>0$, the function $F_{\mu}$ is inf-compact on $\RR^n\times\RR^{2p}$ and strictly convex on $\RR^n\times(0,+\infty)^{2p}$.
\end{proposition}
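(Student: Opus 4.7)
The plan is to treat the two properties separately, leveraging the explicit form of $F_\mu$ on its effective domain and the basic building blocks already in place before the statement. Writing $f$ in the second form provided, one has on $\{D^*x + \tilde I z = 0\} \cap (\RR^n \times (0,+\infty)^{2p})$ the expression $F_\mu(x,z) = \frac{1}{2}\|\Phi x - y\|^2 - \frac{1}{2}\|y\|^2 + \lambda \sum_{i=1}^{2p} z_i - \frac{\mu}{2p} \sum_{i=1}^{2p} \ln z_i + \mu \ln \mu$, while $F_\mu \equiv +\infty$ elsewhere.

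For strict convexity on $\RR^n \times (0,+\infty)^{2p}$, the only nontrivial case is a pair of distinct points $(x_1,z_1),(x_2,z_2)$ both lying in $\dom F_\mu$. If $z_1 \neq z_2$, I would combine convexity of $f$ (sum of the convex quadratic $\frac12\langle Qx,x\rangle$, a linear form, and the indicator of an affine set) with the strict convexity of $\zeta_\mu$ on $(0,+\infty)^{2p}$ recalled right before the statement (Lemma~1 of \cite{barbara2015strict}). The delicate case is $z_1 = z_2$ with $x_1 \neq x_2$: subtracting the two feasibility relations $D^*x_i + \tilde I z_i = 0$ shows $x_1 - x_2 \in \Ker D^*$, and hypothesis~\eqref{eq:hyp-inv} then forces $\Phi(x_1 - x_2) \neq 0$. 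Strict convexity of $u \mapsto \frac12 \|u - y\|^2$ along the segment from $\Phi x_1$ to $\Phi x_2$ delivers the strict inequality on $f$, and hence on $F_\mu$ since $\zeta_\mu(z)$ contributes the same value at each intermediate point.

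For inf-compactness, I would fix $\gamma \in \RR$ and check boundedness of the sublevel set $L_\gamma = \{F_\mu \leq \gamma\}$; closedness is automatic by lower semi-continuity. The scalar map $t \mapsto \lambda t - \frac{\mu}{2p}\ln t$ on $(0,+\infty)$ is coercive, tending to $+\infty$ at both $0^+$ and $+\infty$ with a finite minimum, so dropping the nonnegative fidelity term in the bound $F_\mu \leq \gamma$ confines each $z_i$ to a fixed compact subinterval of $(0,+\infty)$. Once $z$ is bounded, the affine constraint yields a bound on $D^*x = -\tilde I z$; feeding these $z$-bounds back into the inequality controls $\frac12\|\Phi x - y\|^2$ and hence $\Phi x$. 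Finally, hypothesis~\eqref{eq:hyp-inv} says that the linear map $x \mapsto (D^*x, \Phi x)$ is injective on the finite-dimensional space $\RR^n$, so its inverse on the image is bounded and $x$ itself is bounded.

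The main obstacle is the degenerate case $z_1 = z_2$ in the strict-convexity argument: neither $f$ nor $\zeta_\mu$ is by itself strictly convex in $x$ (the form $\frac12\langle Qx,x\rangle$ is only positive semidefinite, and $\zeta_\mu$ depends only on $z$). It is precisely the restricted injectivity~\eqref{eq:hyp-inv}, coupled with the affine constraint linking $x$ and $z$, that upgrades the data-fit term to a strictly convex function of $x$ on each slice $\{z=\mathrm{const}\}\cap \dom F_\mu$, which is what makes the proposition work.
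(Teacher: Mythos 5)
Your argument is correct. The strict-convexity half is essentially identical to the paper's: the same case split on $z_1\neq z_2$ versus $z_1=z_2$, with the degenerate case resolved exactly as you describe --- the affine constraint forces $x_1-x_2\in\Ker D^*$, hypothesis~\eqref{eq:hyp-inv} then gives $\Phi x_1\neq\Phi x_2$, and strict convexity of $\|\cdot-y\|_2^2$ finishes. Where you genuinely diverge is on inf-compactness. The paper never looks at sublevel sets directly; it computes the recession function of the barrier, establishing in (\ref{xiinfty}) that $\xi_\infty$ vanishes on the nonnegative orthant and is $+\infty$ elsewhere, and combines this with item~1 of Proposition~\ref{dcompacity} to conclude $\{(d_x,d_z):\ {F_\mu}_\infty(d_x,d_z)\leq 0,\ d_z\geq0\}=\{(0,0)\}$, which is equivalent to inf-compactness. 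Your route instead bounds the sublevel set $\{F_\mu\leq\gamma\}$ by hand, exploiting the separable explicit form of the log-barrier, the coercivity of $t\mapsto\lambda t-\frac{\mu}{2p}\ln t$ at both ends of $(0,+\infty)$, and finally the injectivity of $x\mapsto(D^*x,\Phi x)$ coming from~\eqref{eq:hyp-inv}. Both are valid. The paper's recession computation is not wasted effort: (\ref{xiinfty}) is reused in Proposition~\ref{coercivity} to show $\zeta(z,0)=0$ and to obtain inf-compactness of $\gamma$ jointly in $(x,z,\mu)$ on $\RR^n\times\RR^{2p}\times[0,\overline{\mu}]$, a uniformity your pointwise-in-$\mu$ estimate would not deliver since the lower bound on $\lambda t-\frac{\mu}{2p}\ln t$ near $t=0$ degenerates as $\mu\downarrow0$. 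Your approach, in exchange, is more elementary and self-contained. One cosmetic slip: the fidelity term $\frac12\|\Phi x-y\|^2-\frac12\|y\|^2$ is not nonnegative, only bounded below by the constant $-\frac12\|y\|^2$; this changes nothing in your estimates.
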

\begin{proof} Let us show that  
\begin{eqnarray}\label{xiinfty}
\xi_\infty(d)=\left\{\begin{array}{ll}0&\mbox{if }d\geq0,\cr+\infty&\mbox{elsewhere.}\end{array}
\right.\end{eqnarray}
Let $(z,d)\in \dom(\xi)\times\RR^{2p}$. We have necessarily $z>0$. First we observe that when $d\not\in[0,+\infty)^{2p}$, $z+\lambda d\not\in[0,+\infty)^{2p}$ for $\lambda$ large enough and then $\xi_\infty(d)=+\infty$. Now consider the case $d\geq0$. Since $z>0$ we have necessarily $z+d>0$. The concave gauge function $\varphi$ is monotone with respect to its domaine the positive orthant. Then by Proposition 2.1 of \cite{barbara_crouzeix},
$$0<\varphi(z+d)\leq\varphi(z+\lambda d)\leq\varphi(\lambda z+\lambda d)=\lambda\varphi(z+d)$$
for $\lambda$ large enough. It follows that
$$\begin{array}{ll}
0=\lim\limits_{\lambda\uparrow+\infty}\displaystyle{\ln\varphi(z+d)-\ln\varphi(z)\over\lambda}&\leq\lim\limits_{\lambda\uparrow+ \infty}\displaystyle{\ln\varphi(z+\lambda d)-\ln\varphi(z)\over\lambda}\cr&\leq\lim\limits_{\lambda\uparrow+\infty} \displaystyle{\ln\lambda\varphi(z+d)-\ln\varphi(z)\over\lambda}=0\end{array}$$ and hence $\lim\limits_{\lambda\uparrow+ \infty}\displaystyle{\ln\varphi(z+\lambda d)-\ln\varphi(z)\over\lambda}=0$. Consequently $\xi_\infty(d)=0$.

By Proposition \ref{dcompacity}, we have $\{(d_x,d_z):\ f_\infty(d_x,d_z)\leq0,\ d_z\geq0\}=\{(0,0)\}$. Thus
$\{(d_x,d_z):\ {F_\mu}_\infty(d_x,d_z)\leq0,\ d_z\geq0\}=\{(0,0)\}$, or equivalently, $F_\mu$ is inf-compact. 

Now let us proceed to prove the strict convexity of $F_\mu$.  Take  $(x,z)\not=(x',z')$ in $\RR^n\times(0,+\infty)^{2p}$ and $t\in(0,1)$. In the case where $z\not= z'$, by strict-convexity of $\zeta_\mu$ on $(0,+\infty)^{2p}$ we have necessarily $F_\mu(t(x,z)+(1-t)(x',z'))<tF_\mu(x,z)+(1-t)F_\mu(x',z').$ Assume that $z=z'$. Using (\ref{eq:hyp-inv}) and the definition of $f$ we obtain $\Phi x\not=\Phi x'$ and the result follows by using the strict convexity of $\|.\|_2^2$.
\end{proof}
Propositions \ref{strict_convexity} and \ref{dcompacity} assert that for every $\mu>0$ there is a unique optimal solution  $(x(\mu),z(\mu))$ to $(P_\mu)$. Moreover using the fact that $F_\mu(x,\cdot)$ is a barrier function for every $x\in\RR^n$, $z(\mu)>0$. Consider the function $\gamma:\RR^n\times[0,+\infty)^{2p}\times[0,+\infty)\to \RR\cup\{+\infty\}$ defined by
$$\gamma(x,z,\mu)=F_{\mu}(x,z).$$
Then we have the following proposition.

\begin{proposition}
\label{coercivity}
The function $\gamma$ is convex and lsc on $\RR^n\times\RR^{2p}\times[0,+\infty)$. It is inf-compact on $\RR^n\times\RR^{2p}\times[0,{\overline \mu}]$, $\forall\overline{\mu}>0$ being fixed. Moreover $\theta$ is convex and continuous on $[0,+\infty)$, $\theta(0)=\alpha$ and $f(x,z)=\gamma(x,z,0)$, $\forall (x,z)\in\RR^n\times(0,+\infty)^{2p}$.
\end{proposition}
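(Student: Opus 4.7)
The plan is to recognize $\zeta$ as the standard closed perspective of $\xi$, with its $\mu=0$ extension given precisely by the recession $\xi_\infty$ computed in \eqref{xiinfty}, and then chain together three standard marginal/recession-function arguments. First I would prove joint convexity and lsc of $\gamma$. The perspective construction makes $\zeta$ convex, lsc, and positively homogeneous of degree one on $\RR^{2p}\times[0,+\infty)$; lower semicontinuity at $\mu=0$ is automatic because $\zeta(z,0)=\xi_\infty(z)$ is precisely the recession of $\xi$. The function $f$ is convex and lsc as the sum of a continuous quadratic and the indicator of the closed affine set $\{(x,z):D^*x+\tilde{I}z=0\}$. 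Hence $\gamma=f+\zeta$ is convex and lsc on $\RR^n\times\RR^{2p}\times[0,+\infty)$.

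Next I would handle the inf-compactness of $\gamma$ on $\RR^n\times\RR^{2p}\times[0,\bar\mu]$ by augmenting it with $\delta_{[0,\bar\mu]}(\mu)$ and computing the recession cone of the resulting proper lsc convex function. The indicator forces any recession direction to have $d_\mu=0$. The sum rule for recession functions applies since $\dom f\cap\dom\zeta\neq\emptyset$ (any $(x,z,\mu)$ with $\mu>0$ and $D^*x=z_2-z_1$, $z_1,z_2>0$, works), giving $\gamma_\infty(d_x,d_z,0)=f_\infty(d_x,d_z)+\zeta_\infty(d_z,0)$. Because $\zeta$ is positively homogeneous of degree one, $\zeta_\infty=\zeta$, so $\zeta_\infty(d_z,0)=\xi_\infty(d_z)$, which equals $0$ on $\{d_z\geq 0\}$ and $+\infty$ otherwise. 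The inequality $\gamma_\infty(d_x,d_z,0)\leq 0$ therefore collapses to $d_z\geq 0$ and $f_\infty(d_x,d_z)\leq 0$, which by Proposition~\ref{dcompacity}.1 forces $(d_x,d_z)=(0,0)$.

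For $\theta$, joint convexity of $\gamma$ immediately yields convexity of the marginal. Finiteness of $\theta$ on $[0,+\infty)$ is guaranteed by the two previous items together with Proposition~\ref{strict_convexity}, and continuity on $(0,+\infty)$ follows from the standard fact that a finite convex function on an open interval is continuous. For continuity at $\mu=0$, the lsc of $\gamma$ and the inf-compactness on $[0,\bar\mu]$ give lsc of $\theta$ at $0$, while the elementary convex inequality $\theta(\mu)\leq(1-\mu/\mu')\theta(0)+(\mu/\mu')\theta(\mu')$ passed to the limit $\mu\to 0^+$ yields the matching $\limsup$. The identity $\theta(0)=\alpha$ comes from unfolding the definitions: $\gamma(x,z,0)=f(x,z)+\xi_\infty(z)$ equals $f(x,z)$ when $z\geq 0$ and $+\infty$ otherwise, so minimizing reduces exactly to \eqref{eq:paug}. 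Finally, on $\RR^n\times(0,+\infty)^{2p}$ one has $\xi_\infty(z)=0$, whence $f(x,z)=\gamma(x,z,0)$.

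The main technical obstacle I anticipate is the careful perspective bookkeeping: verifying that $\zeta$ is lsc at $\mu=0$ and the identity $\zeta_\infty=\zeta$ both depend on its positive homogeneity of degree one together with the identification $\zeta(\cdot,0)=\xi_\infty(\cdot)$, and the sum rule $\gamma_\infty=f_\infty+\zeta_\infty$ requires the domain qualification discussed above. None of these steps is deep, but each must be executed rather than merely invoked, and the rest of the proposition then falls out as straightforward marginal-function and convex-analysis consequences.
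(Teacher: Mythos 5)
Your proposal is correct and follows the paper's skeleton for most of the statement: convexity of $\gamma$ via convexity of the perspective $\zeta$, inf-compactness on the slab $\RR^n\times\RR^{2p}\times[0,\overline\mu]$ via the recession computation $\{\tilde\gamma_\infty\leq 0\}=\{(d_x,d_z,0):f_\infty(d_x,d_z)\leq0,\ d_z\geq0\}=\{0\}$ from Proposition~\ref{dcompacity}, convexity of $\theta$ as a marginal of a jointly convex function, and $\theta(0)=\alpha$ from the identity $\gamma(\cdot,\cdot,0)=f+\delta_{[0,+\infty)^{2p}}$. You are in fact more careful than the paper on two points it leaves implicit: the lower semicontinuity of $\zeta$ at $\mu=0$ (identifying $\zeta$ as the \emph{closed} perspective, whose $\mu=0$ slice is $\xi_\infty$) and the domain qualification needed for the sum rule $\gamma_\infty=f_\infty+\zeta_\infty$. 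The one genuinely different step is the continuity of $\theta$ at $0$: the paper extracts a convergent subsequence of the minimizers $(x(\mu^k),z(\mu^k))$ and sandwiches $\theta(\mu^k)$ between $\alpha$ and $f(x,z)-\mu^k\ln(\varphi(z)/\mu^k)$ for arbitrary feasible $(x,z)$ with $z>0$, exploiting the explicit logarithmic form of the barrier, whereas you obtain $\liminf_{\mu\downarrow0}\theta(\mu)\geq\theta(0)$ from the general principle that the marginal of an lsc inf-compact function is lsc, and $\limsup_{\mu\downarrow0}\theta(\mu)\leq\theta(0)$ from the chord inequality $\theta(\mu)\leq(1-\mu/\mu')\theta(0)+(\mu/\mu')\theta(\mu')$. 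Your route is cleaner and independent of the particular barrier (it would work for any $\zeta$ with the stated convexity and recession properties); the paper's route has the side benefit of exhibiting explicitly that $\mu\ln(\varphi(z(\mu))/\mu)\to0$, a fact in the spirit of the convergence analysis that follows, and it correctly notes (as you do) that the $\limsup$ bound is the only direction convexity gives for free since a convex function may jump up at the endpoint of its domain.
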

\begin{proof}
It is known that the function $\zeta$ is convex on $\RR^{2p}\times[0,+\infty)$
and so is $\gamma$. The function $\theta$ is then convex on $[0,+\infty)$ as the
infimum over $(x,z)$ of a convex function in $(x,z,\mu)$. Now the function
$\zeta(z,.)$ is continuous on $[0,+\infty)$ and, because of (\ref{xiinfty}),
$\zeta(z,0)=0$ for all $z\in(0,+\infty)^{2p}$. Thus $f(x,z)=\gamma(x,z,0)$ for
all $(x,z)\in\RR^n\times(0,+\infty)^{2p}$ and therefore $\theta(0)=\alpha$ (the
optimal value of the problem (\ref{eq:paug})). Set
$\tilde{\gamma}=\gamma_{|\RR^n\times\RR^{2p}\times[0,\overline{\mu}]}$ the
restriction of $\gamma$ to the set
$\RR^n\times\RR^{2p}\times[0,\overline{\mu}]$. Then $\{(d_x,d_z,\mu):\
\tilde{\gamma}_\infty(d_x,d_z,\mu)\leq0,\ d_z\geq0,\ \mu=0\}=\{(d_x,d_z,0):\
f_\infty(d_x,d_z)\leq0,\ d_z\geq0\}=\{(0,0,0)\}$ (see Proposition
\ref{dcompacity}). The function $\gamma$ is then inf-compact on
$\RR^n\times\RR^{2p}\times[0,{\overline \mu}]$. Consequently, there is a compact
$\tilde{S}$ such that $(x(\mu),z(\mu))\in\tilde{S}$,
$\forall\mu\in(0,\overline{\mu}]$, i.e.,
$(x(\mu),z(\mu))_{\mu\in(0,\overline{\mu})}$ is bounded. We established that $\theta$ is convex on $[0,+\infty)$. It is then continuous on $(0,+\infty)$. Let us show now that $\lim\limits_{\mu\downarrow 0}\theta(\mu)=\theta(0)=\alpha$. In this respect we shall prove that 
$\lim\limits_{\mu\downarrow0}\mu\ln\left(\displaystyle{\varphi(z(\mu)) \over\mu}\right)= 0$. Let $(\mu^k)_{k\in\NN}$ be a positive sequence such that $\lim\limits_{k\uparrow+\infty}\mu^k=0.$ We established that $(x(\mu),z(\mu))_{\mu\in(0,\overline{\mu}]}$ is bounded. It follows that the set $\{(x(\mu^k),z(\mu^k))\}$ contains a subsequence converging to a point  $(\tilde{x},\tilde{z})$.
In the case where $\tilde{z}>0$ the result is obvious. Assume that $\varphi(\tilde{z})=0$. Then for $k$ sufficiently large one has
$$\begin{array}{ll}\alpha-\mu^k\ln\left(\displaystyle{\varphi(z)\over\mu^k}\right)
\leq \theta(\mu^k)&=f(x(\mu^k),z(\mu^k))-\mu^k\ln\left(\displaystyle{\varphi(z(\mu^k)) \over \mu^k}\right)\cr&
\leq f(x,z)-\mu^k\ln\left(\displaystyle{\varphi(z)\over\mu^k}\right)
\end{array}$$
for every $(x,z)$ satisfying $z>0$. Since $\lim\limits_{k\uparrow 0}\mu^k\ln\left(\displaystyle{\varphi(z)\over\mu^k}\right)=0$, we have
$$\alpha\leq\lim\inf\limits_{k\uparrow+\infty}\theta(\mu^k)\leq f(x,z)$$
and then
$$\alpha\leq\lim\sup\limits_{k\uparrow+\infty}\theta(\mu^k)\leq \inf\limits_{x,z}\{f(x,z):\ z>0\}=\inf\limits_{x,z}\{f(x,z):\ z\geq0\}=\alpha.$$
Consequently $\lim\limits_{k\uparrow+\infty}\theta(\mu^k)=\alpha$.
\end{proof}
\bigskip
\bigskip

Given $\mu>0$, the KKT optimalty conditions for the problem $(P_\mu)$ can be formulated, for some $u\in\RR^p$, as
$$\left\{\begin{array}{ll}Qx(\mu)-c-Du=0,\\ \lambda e-\displaystyle{ \mu\over 2p}(Z(\mu))^{-1}e-{\tilde I}^*u=0,\\ D^*x(\mu)+{\tilde I}z(\mu)=0,\end{array}\right.$$
where $Z(\mu)=diag(z(\mu))$.
Observe that $u$ is necessarily unique. Put 
$$u=u(\mu)\mbox{ and }s(\mu)=\displaystyle {\mu\over 2p}Z^{-1}(\mu)e.$$ We rewrite the KKT conditions as
$$\left\{\begin{array}{lr}Qx(\mu)-c-Du(\mu)=0&(E1)\\ \lambda e-s(\mu)-{\tilde I}^*u(\mu)=0&(E2)\\ Z(\mu)s(\mu)=\displaystyle {\mu\over 2p}e&(E3)\\D^*x(\mu)+{\tilde I}z(\mu)=0&(E4)\end{array}\right.$$

\begin{proposition}
For every $\mu>0$, $(s(\mu),u(\mu))$ is a feasible solution to (\ref{eq:daug}) and $\big((s(\mu),u(\mu)\big)_{\mu\in(0,\overline{\mu}]}$ is bounded.
\end{proposition}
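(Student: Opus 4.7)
My plan is to verify dual feasibility directly from the KKT system and then invoke the already-proved boundedness of the dual feasible set.

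First I would check that $(x(\mu), s(\mu), u(\mu))$ satisfies the three constraints defining the domain of $g$. Equation $(E1)$ immediately rewrites as $Du(\mu) + c - Qx(\mu) = 0$, which is the first constraint. Equation $(E2)$ is, literally, $s(\mu) = \lambda e - \tilde{I}^* u(\mu)$, which is the second constraint. The nontrivial point is the sign constraint $s(\mu) \in [0,+\infty)^{2p}$: from $(E3)$ we read $z_i(\mu)\, s_i(\mu) = \frac{\mu}{2p}$ for each $i$, and we have established in Proposition~\ref{coercivity} (via the barrier character of $F_\mu(x,\cdot)$) that $z(\mu) > 0$. Combined with $\mu > 0$, this yields $s_i(\mu) = \frac{\mu}{2p\, z_i(\mu)} > 0$ for every $i$, so $s(\mu) \in (0,+\infty)^{2p} \subset [0,+\infty)^{2p}$. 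Hence $(x(\mu), s(\mu), u(\mu))$ is a feasible triple for \eqref{eq:daug}.

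For the second assertion, boundedness of $\big(s(\mu), u(\mu)\big)_{\mu\in(0,\overline{\mu}]}$ is then immediate from Proposition~\ref{dcompacity}.2, which states that the projection onto the $(s,u)$-coordinates of the dual feasible set is bounded. For explicitness one can recover the bound directly: from $(E2)$, $s(\mu) = \big(\lambda e^p - u(\mu),\, \lambda e^p + u(\mu)\big) \geq 0$, so $-\lambda e^p \leq u(\mu) \leq \lambda e^p$ componentwise, whence $\|u(\mu)\|_\infty \leq \lambda$ uniformly in $\mu$, and then $\|s(\mu)\|_\infty \leq 2\lambda$.

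I do not expect a serious obstacle here: the only substantive input is the strict positivity $z(\mu) > 0$, which was already obtained from the barrier structure in the discussion preceding the proposition. Everything else is a bookkeeping verification of $(E1)$--$(E3)$ against the definition of $g$, followed by a direct appeal to Proposition~\ref{dcompacity}.
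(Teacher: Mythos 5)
Your proof is correct and follows essentially the same route as the paper: feasibility is read off from $(E1)$, $(E2)$ and the positivity $s(\mu)=\frac{\mu}{2p}(Z(\mu))^{-1}e>0$, and boundedness is delegated to Proposition~\ref{dcompacity}.2 (your explicit componentwise bound $-\lambda e^p\leq u(\mu)\leq\lambda e^p$ is exactly the computation inside the paper's proof of that earlier proposition).
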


\begin{proof}
By $(E1)$, $(E2)$ and the fact that $s(\mu)=\displaystyle {\mu\over 2p}(Z(\mu))^{-1}e>0$, $(u(\mu),s(\mu))$ is a feasible solution to (\ref{eq:daug}). The boundedness of $(s(\mu),u(\mu))_{\mu\in(0,{\overline\mu}]}$ is due to Proposition \ref{dcompacity}.
\end{proof}

Set ${\overline I}=\displaystyle\bigcup_{\atop z\in S(.,(P))}I(z)$ and ${\overline J}=\displaystyle\bigcup_{\atop s\in S(.,(D))}J(s)$, where 
$$S(.,(P))=\left\{z:\ \exists x\in\RR^n\mbox{ such that } (x,z)\in S_{(P)}\right\},$$ 
$$S(.,(D))=\left\{s:\ \exists u\in\RR^p\mbox{ such that } (s,u)\in S_{(D)}\right\},$$ $$I(z)=\{i:\ z_i>0\}\mbox{ the support of }z\mbox{ and }J(s)=\{i:\ s_i>0\}\mbox{ the support of }s.$$  

\begin{lemma}\label{complementarity}

There is at least one $({\hat z},\hat{s})\in S(.,(P))\times S(.,(D))$ such that ${\overline I}=I({\hat z})$ and $\overline{J}=J(\hat{s})$.
\end{lemma}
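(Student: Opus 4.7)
The plan is to construct $\hat{z}$ and $\hat{s}$ as finite convex combinations of elements of $S(.,(P))$ and $S(.,(D))$ that simultaneously ``activate'' every index of $\overline{I}$ and $\overline{J}$ respectively. The whole argument rests on the convexity of the optimal-solution sets together with the fact that the indexing set $\{1,\dots,2p\}$ is finite.

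First I would observe that both $S(.,(P))$ and $S(.,(D))$ are convex. Indeed, $S_{(P)}$ is convex as the optimal set of the convex program~\eqref{eq:paug}, and $S(.,(P))$ is the image of $S_{(P)}$ under the linear projection $(x,z)\mapsto z$, hence convex; the analogous reasoning applied to the projection $(x,s,u)\mapsto s$ gives convexity of $S(.,(D))$.

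Second, since $\overline{I}\subseteq\{1,\dots,2p\}$ is finite, for every $i\in\overline{I}$ I may pick $z^i\in S(.,(P))$ with $(z^i)_i>0$ and set
\begin{equation*}
  \hat{z}=\frac{1}{|\overline{I}|}\sum_{i\in\overline{I}}z^i.
\end{equation*}
Convexity of $S(.,(P))$ then yields $\hat{z}\in S(.,(P))$. For $i\in\overline{I}$, the nonnegativity constraint $z\geq 0$ on every primal feasible solution gives $\hat{z}_i\geq |\overline{I}|^{-1}(z^i)_i>0$, while for $i\notin\overline{I}$ the defining property of $\overline{I}$ forces $(z^j)_i=0$ for every $j$, so $\hat{z}_i=0$. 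Hence $I(\hat{z})=\overline{I}$. The same recipe applied to $\overline{J}$ and $S(.,(D))$, using $s\geq 0$ for dual feasibility, produces $\hat{s}\in S(.,(D))$ with $J(\hat{s})=\overline{J}$.

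I do not anticipate any real obstacle: the only non-routine ingredient is the convexity of the projected sets $S(.,(P))$ and $S(.,(D))$, which is immediate since projection is a linear map. The rest of the proof is entirely mechanical, and the explicit convex combination above does the job in both the primal and dual cases.
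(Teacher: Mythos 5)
Your proof is correct and follows essentially the same route as the paper: average finitely many optimal points whose supports cover $\overline{I}$ (resp. $\overline{J}$), then use convexity of the projected optimal set and nonnegativity of feasible $z$ (resp. $s$) to conclude that the average has exactly the union support. The only cosmetic difference is that you index the convex combination by the elements of $\overline{I}$ rather than by at most $2p$ chosen solutions, and you spell out the inclusion $I(\hat z)\subseteq\overline{I}$ explicitly, which the paper leaves implicit.
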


\begin{proof}
We have ${\overline I}$ a subset of a finite set $\{1,\cdots,2p\}$. Let then $(z^1,\ z^2,\cdots,z^k)\in S(.,(P))^k$, for some $k\in\{1,2,\cdots,2p\}$
satisfying ${\overline I}=I\left(z^1\right)\cup I\left(z^2\right)\cup\cdots\cup I\left(z^k\right)$. Set ${\hat z}=\displaystyle{1\over k}\left(z^1+z^2+\cdots+z^k\right)$. Since $S(.,(P))$ is convex  ${\hat z}\in S(.,(P))$. So it is easy to see that $I(z^i)\subset I({\hat z})$, $\forall i\in\{1,2,\cdots,k\}$. The result then follows. A vector $\hat{s}$ is constructed in a similar way.
\end{proof} 
Observe that every optimal solution $(x,z)$ of the problem (\ref{eq:paug}) satisfying $I(z)=\overline{I}$ is in the relative interior of $S_{(P)}$. Similarily every optimal solution $(x,s,u)$ of the problem (\ref{eq:daug}) satisfying $J(s)=\overline{J}$ is in the relative interior of $S_{(D)}$.

\bigskip

Set 
$$({\overline x},{\overline z})=\arg\max\left\{\varphi_{\overline I}(z_{\overline I}):\ \displaystyle{1\over 2}\langle Qx,x\rangle-\langle c,x\rangle+\lambda\langle e,z\rangle=\alpha,\ D^*x+{\tilde I}z=0,\ z_{\overline{J}}=0\right\},$$
where 
$$\varphi_{\overline I}(z_{\overline I})=\left\{\begin{array}{ll}
\left(\prod\limits_{i\in {\overline I}}z_i\right)^{1\over \card({\overline I})}&\mbox{if }z_J\in(0,+\infty)^{\card(J)}\cr
-\infty&\mbox{elsewhere.}
\end{array}\right.
$$
Symmetrically we set
$$({\overline s},{\overline u})=\arg\max\left\{\varphi_{\overline J}(s_{\overline J}):\ 
s=\lambda e-\tilde{I}^*u,\ D u+c-Q{\overline x}=0, s_{\overline I}=0\right\},$$
where
$$\varphi_{\overline{J}}(s_{\overline{J}})=\left\{\begin{array}{ll}\left(\prod\limits_{i\in \overline{J}}s_i\right)^{1\over \card(\overline{J})}&\mbox{if }s_{\overline{J}}\in(0,+\infty)^{\card(\overline{J})}\cr-\infty&\mbox{elsewhere.}\end{array}\right.
$$
$(\overline{x},\overline{z})$ is called the analytic center\footnotemark[1]\footnotetext[1]{A generalization of the central path and the analytic center is proposed in \cite{barbara2015strict} by using the so called concave gauge functions.} of (\ref{eq:paug}) and $(\overline{x},\overline{s},\overline{u})$ the analytic center of (\ref{eq:daug}). The uniqueness is ensured by the strict quasiconcavity of functions $\varphi_{\overline{I}}$ and  $\varphi_{\overline{J}}$ on the interior of their respective domain and the assumption (\ref{eq:hyp-inv}). We now give an important result.

Its proof is inspired in part by those of Theorems I.7 and I.9 in \cite{RoTevi}.
\begin{theorem}\label{thm:convergence}
Under assumption \ref{eq:hyp-inv}, we have $$\lim\limits_{\mu\downarrow0}(x(\mu),z(\mu),s(\mu),u(\mu))=({\overline x},{\overline z},{\overline s},{\overline u}).$$ Moreover, $({\overline x},{\overline z})$  and $({\overline x},{\overline s},{\overline u})$ belong to the relative interior of $S_{(P)}$ and $S_{(D)}$, respectively. 
\end{theorem}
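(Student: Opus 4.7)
The plan is to adapt the classical central-path convergence argument (Roos-Terlaky-Vial, as hinted in the statement) to our polyhedral QP setting, proceeding in three main steps plus an identification.

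First, I would extract cluster points. By the earlier Propositions~\ref{coercivity} and~\ref{dcompacity}, the family $(x(\mu), z(\mu), s(\mu), u(\mu))_{\mu \in (0, \overline{\mu}]}$ is bounded, so any sequence $\mu_k \downarrow 0$ admits a cluster point $(x^*, z^*, s^*, u^*)$. Passing to the limit in $(E1)$-$(E4)$ and using $z_i(\mu) s_i(\mu) = \mu/(2p) \to 0$, one gets $(x^*, z^*) \in S_{(P)}$, $(x^*, s^*, u^*) \in S_{(D)}$, and $z^*_i s^*_i = 0$ for every $i$.

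Second, I would fix a strictly complementary optimal pair $(\hat{x}, \hat{z}, \hat{s}, \hat{u})$ furnished by Lemma~\ref{complementarity}, so that $I(\hat{z}) = \overline{I}$ and $J(\hat{s}) = \overline{J}$; note that strong duality forces $\Phi$ applied to the primal and dual $x$-components to coincide, so we may take a single $\hat{x}$ in what follows. Combining $s(\mu) - \hat{s} = -\tilde{I}^*(u(\mu) - \hat{u})$, $\tilde{I}(z(\mu) - \hat{z}) = -D^*(x(\mu) - \hat{x})$ and $D(u(\mu) - \hat{u}) = Q(x(\mu) - \hat{x})$ yields the identity
\begin{equation*}
\langle z(\mu) - \hat{z}, s(\mu) - \hat{s}\rangle = \|\Phi(x(\mu) - \hat{x})\|^2 \geq 0.
\end{equation*}
Expanding the left-hand side, using $\langle \hat{z}, \hat{s}\rangle = 0$ (complementary slackness at optimum) and $\langle z(\mu), s(\mu)\rangle = \mu$ from $(E3)$, and then substituting $z_i(\mu) s_i(\mu) = \mu/(2p)$, gives the crucial inequality
\begin{equation*}
\sum_{i \in \overline{J}} \frac{\hat{s}_i}{s_i(\mu)} + \sum_{i \in \overline{I}} \frac{\hat{z}_i}{z_i(\mu)} \leq 2p.
\end{equation*}

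Third, I would extract the conclusion. This inequality forces the uniform lower bounds $z_i(\mu) \geq \hat{z}_i/(2p)$ for $i \in \overline{I}$ and $s_i(\mu) \geq \hat{s}_i/(2p)$ for $i \in \overline{J}$, so at the cluster point $I(z^*) = \overline{I}$ and $J(s^*) = \overline{J}$, i.e. $(x^*, z^*) \in \rint S_{(P)}$ and $(x^*, s^*, u^*) \in \rint S_{(D)}$. Passing to the limit in the inequality and applying AM-GM, valid because $|\overline{I}| + |\overline{J}| = 2p$ under strict complementarity, produces
\begin{equation*}
\prod_{i \in \overline{I}} z^*_i \, \prod_{i \in \overline{J}} s^*_i \;\geq\; \prod_{i \in \overline{I}} \hat{z}_i \, \prod_{i \in \overline{J}} \hat{s}_i .
\end{equation*}
Applying this with $(\hat{z}, \hat{s}) = (\overline{z}, \overline{s})$ and combining with the reverse inequalities defining the analytic centers forces equality everywhere, and the strict quasi-concavity of $\varphi_{\overline{I}}$ and $\varphi_{\overline{J}}$ then yields $z^* = \overline{z}$, $s^* = \overline{s}$; the restricted injectivity~\eqref{eq:hyp-inv} together with $s^* = \lambda e - \tilde{I}^* u^*$ pin down $x^* = \overline{x}$ and $u^* = \overline{u}$. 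Uniqueness of the cluster point gives full convergence.

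The delicate point is the inner-product identity in the second step: in the LP prototype the cross term vanishes identically, while here it equals $\|\Phi(x(\mu) - \hat{x})\|^2$, which is harmless in the limit but requires a careful invocation of strong duality to align the primal and dual $x$-components. A secondary technical issue is the strict complementarity $\overline{I} \cup \overline{J} = \{1, \dots, 2p\}$, which I would justify via a Goldman-Tucker-type argument applied to the polyhedral primal-dual pair~\eqref{eq:paug}-\eqref{eq:daug}.
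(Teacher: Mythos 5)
Your proposal follows essentially the same route as the paper's proof: boundedness and cluster-point extraction through the KKT system $(E1)$--$(E4)$, the orthogonality/duality identity $\langle z(\mu)-\hat z,\, s(\mu)-\hat s\rangle=\|\Phi(x(\mu)-\hat x)\|^2$ (the paper states it as $\langle\overline z,s(\mu^k)\rangle+\langle\overline s,z(\mu^k)\rangle=\mu^k-\langle Q(x(\mu^k)-\overline x),x(\mu^k)-\overline x\rangle$, comparing directly with the analytic center rather than first with an arbitrary maximal-support pair from Lemma~\ref{complementarity}), the resulting bound $\sum_{i\in\overline J}\hat s_i/s_i(\mu)+\sum_{i\in\overline I}\hat z_i/z_i(\mu)\le 2p$ for support identification, and then AM--GM combined with the optimality and uniqueness of the analytic center. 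The one place you go beyond the paper is in explicitly flagging the requirement $\card(\overline I)+\card(\overline J)=2p$ needed for the AM--GM step with exponent $1/(2p)$ --- a hypothesis the paper uses silently --- though a Goldman--Tucker argument applies to linear programs, so to justify it here you would first have to fix $\Phi x$ at its (unique) optimal value and reduce the primal-dual pair \eqref{eq:paug}--\eqref{eq:daug} to the resulting LP.
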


\begin{proof}
We proved that $\big((x(\mu),z(\mu)\big)_{\mu\in(0,\overline{\mu}]}$ and $\big((s(\mu),u(\mu)\big)_{\mu\in(0,\overline{\mu}]}$ are bounded. Let $(\mu^k)_{k\in \NN}$ a positive increasing sequence satisfying $$\lim\limits_{k\uparrow +\infty}\mu^k=0\mbox{ and }\lim\limits_{k\uparrow+\infty}(x(\mu^k),z(\mu^k),s(\mu^k),u(\mu^k))=(\tilde{x},\tilde{z},\tilde{s},\tilde{u}).$$ Then replacing $\mu$ by $\mu^k$ in $(E1)-(E4)$ and letting $k$ tend to $+\infty$, we observe that the pair $\{(\tilde{x},\tilde{z}),(\tilde{x},\tilde{s},\tilde{u})\}$ satisfies the KKT optimality conditions of (\ref{eq:paug}) and then it is a primal-dual optimal solution pair of (\ref{eq:paug}). Let us show now that $I(\tilde{z})=\overline{I}$ and $J(\tilde{s})=\overline{J}$. 
Now by $(E1)$, $(E2)$ and $(E4)$ we have  
$$\left(\begin{array}{l}
x(\mu^k)-\overline{x}\cr
z(\mu^k)-\overline{z}\end{array}\right)\in\Ker{\left(\begin{array}{ll}D^*&{\tilde I}\end{array}\right)}\mbox{ and }
\left(\begin{array}{l}
Q(x(\mu^k)-\overline{x})\cr
-(s(\mu^k)-\overline{s})\end{array}\right)\in\Im{\left(\begin{array}{l}D \\ {\tilde I}^*\end{array}\right)}.
$$
Then using the following orthogonality property 
\begin{equation}
\label{orthogonality}
\Ker{\left(\begin{array}{ll}{D^*}&{\tilde I}\end{array}\right)}=\left[\Im{\left(\begin{array}{l}D \\ {\tilde I}^*\end{array}\right)}\right]^\bot,
\end{equation} 
$(E3)$ and the fact that $\langle\overline{z},\overline{s}\rangle=\langle\tilde{z},\tilde{s}\rangle=0$ we have 
$$\langle\overline{z},s(\mu^k)\rangle+\langle\overline{s},z(\mu^k)\rangle=\mu^k-\langle Q(x(\mu^k)-\overline{x}),x(\mu^k)-\overline{x}\rangle.$$ 
Since in addition $I(\overline{z})=\overline{I}$, $J(\overline{s})=\overline{J}$ and $Q$ is positive semi-definite we get
$$\sum\limits_{i\in \overline{I}}\overline{z}_is(\mu^k)_i+\sum\limits_{i\in \overline{J}}\overline{s}_iz(\mu^k)_i=\mu^k-\langle Q(x(\mu^k)-\tilde{x}),x(\mu^k)-\tilde{x}\rangle\leq\mu^k.$$
But from $(E3)$, $z(\mu^k)_is(\mu^k)_i=\displaystyle{\mu^k\over 2p},\ \forall i$. it follows that
$$\displaystyle\sum\limits_{i\in \overline{J}}{\overline{s}_i\over s(\mu^k)_i}+\sum\limits_{i\in \overline{I}}{\overline{z}_i\over z(\mu^k)_i}\leq 2p.$$
Now letting $k$ tend to $+\infty$, we get on the one hand
$$0<\displaystyle\sum\limits_{i\in \overline{J}}{\overline{s}_i\over \tilde{s}_i}+\sum\limits_{i\in \overline{I}}{\overline{z}_i\over \tilde{z}_i}\leq 2p<+\infty$$
and then, by construction of $\overline{I}$ and $\overline{J}$, we have necessarily $I(\tilde{z})=\overline{I}$ and $J(\tilde{s})=\overline{J}$. On the other hand,
using the arithmetic-geometric mean inequality we get
$$\left(\prod\limits_{i\in \overline{J}}\displaystyle{\overline{s}\over \tilde{s}_i}\prod\limits_{i\in \overline{I}}\displaystyle{\overline{z}\over \tilde{z}_i}\right)^{1\over 2p}\leq{1\over 2p}\left(\displaystyle\sum\limits_{i\in \overline{J}}{\overline{s}\over \tilde{s}_i}+\sum\limits_{i\in \overline{I}}{\overline{z}\over \tilde{z}_i}\right)\leq 1$$
and then 
$$\varphi_{\overline{J}}( \overline{s}_{\overline{J}})\varphi_{\overline{I}}( \overline{z}_{\overline{I}})\leq\varphi_{\overline{J}}( \tilde{s}_{\overline{J}})\varphi_{\overline{I}}( \tilde{z}_{\overline{I}}).$$
But, by definition of $(\overline{x},\overline{z},\overline{s},\overline{u})$, $\varphi_{\overline{J}}( \tilde{s}_{\overline{J}})\leq \varphi_{\overline{J}}( \overline{s}_{\overline{J}})$ and $\varphi_{\overline{I}}( \tilde{z}_{\overline{I}})\leq \varphi_{\overline{I}}( \overline{z}_{\overline{I}})$.  The result then follows. 
\end{proof}

Consequently, the following corollary holds
\begin{corollary}
  Under assumption (\ref{eq:hyp-inv}), we have $\lim\limits_{\mu \downarrow 0} x(\mu) = \bar x \in \rint \Xl$.
\end{corollary}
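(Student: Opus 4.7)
The plan is to deduce the corollary by combining the convergence statement of~\cref{thm:convergence} with the observation that the natural projection onto the $x$-variable sends $S_{(P)}$ onto $\Xl$, and then invoking a standard identity relating relative interiors to images under linear maps.

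First, the convergence $x(\mu)\to\bar x$ is an immediate consequence of~\cref{thm:convergence}, since the theorem provides convergence of the whole quadruple $(x(\mu),z(\mu),s(\mu),u(\mu))$ to $(\bar x,\bar z,\bar s,\bar u)$ as $\mu\downarrow 0$. So the only substantive claim left is that $\bar x\in\rint\Xl$.

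Next I would identify $\Xl$ with the image of $S_{(P)}$ under the linear projection $\pi\colon (x,z)\mapsto x$. This is clear from the reformulation introduced at the start of~\cref{sec:finding}: every $(x,z)\in S_{(P)}$ satisfies $x\in\Xl$ by construction, and conversely any $x\in\Xl$ can be extended to an optimal pair of~\cref{eq:paug} by choosing $z$ to achieve $\sum_i z_i=\normu{D^*x}$ (for instance, $z_i=\max(0,-(D^*x)_i)$ and $z_{p+i}=\max(0,(D^*x)_i)$ for $i\in\{1,\dots,p\}$). Hence $\pi(S_{(P)})=\Xl$.

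Finally, I would invoke the standard convex-analysis identity that for a convex set $C$ and an affine map $\pi$ one has $\pi(\rint C)=\rint\pi(C)$ (see, e.g., Theorem~6.6 in~\cite{rockafellar}). Applied to $C=S_{(P)}$, this gives $\rint\Xl=\pi(\rint S_{(P)})$. Since~\cref{thm:convergence} asserts $(\bar x,\bar z)\in\rint S_{(P)}$, we conclude $\bar x=\pi(\bar x,\bar z)\in\rint\Xl$, which finishes the proof.

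There is essentially no obstacle here: all the analytic work has been absorbed into~\cref{thm:convergence}, and the corollary only requires recovering the original solution set from the lifted formulation of~\cref{sec:finding} and applying a routine fact about projections of relative interiors.
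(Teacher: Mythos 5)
Your proposal is correct and follows essentially the same route as the paper: both deduce $(\bar x,\bar z)\in\rint S_{(P)}$ from Theorem~\ref{thm:convergence} and then pass to $\rint\Xl$ via the linear projection $(x,z)\mapsto x$, using the standard identity $\pi(\rint C)=\rint\pi(C)$. Your version merely spells out the surjectivity $\pi(S_{(P)})=\Xl$, which the paper leaves implicit.
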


\begin{proof}
By  Theorem \ref{thm:convergence} $(\overline{x},\overline{z})$ belongs to  the relative interior of $S_{(P)}$ and hence $\overline{x}$ belongs to the linear projection of the relative interior of $S_{(P)}$ which is equal to $\rint \Xl$.
\end{proof}
Using the analysis, we propose an algorithm directly adapted from the Predictor-corrector Mehrotra's algorithm~\cite{Mehrotra}.
The pseudo-code is given in Algorithm~\ref{alg:main}.
The user is expected to give a primal-dual starting point $(x^0,z^0,u^0,s^0)$ satisfying $z^0 > 0$ and $s^0 > 0$, the scenario $\Phi$, $D^*$, $y$, a stopping criterion $\epsilon > 0$, and a relaxation parameter $\eta \in (0,1)$.

To illustrate our theoretical results, we consider a very simple scenario in $\RR^2$ to $\RR$.
Let $D = \Id_2$, $\Phi = (1 \quad 1)$, $y = 1$ and $\lambda = \frac{1}{2}$.
The first order conditions reads
\begin{align*}
  2 x_1 + 2 x_2 - 2 + s_1 &= 0 \\
  2 x_1 + 2 x_2 - 2 + s_2 &= 0 ,
\end{align*}
where $s \in \partial \normu{\cdot}(x)$.
One can check that $x^\star = (\frac{1}{2} \quad 0)^*$ is a solution.
Using the fact that $\Xl \subseteq x^\star + \Ker \Phi$ and that every solution share the same $\lun$-norm, we have that $\Xl = \conv{(\frac{1}{2} \quad 0)^*, (0 \quad \frac{1}{2})^*}$.
Figure~\ref{fig:path} represents the evolution of the primal iterate on the plane $\RR^2$.
\begin{figure}[h]
  \centering
  \begin{subfigure}[t]{0.45\textwidth}
    \centering
    \includegraphics[height=2in]{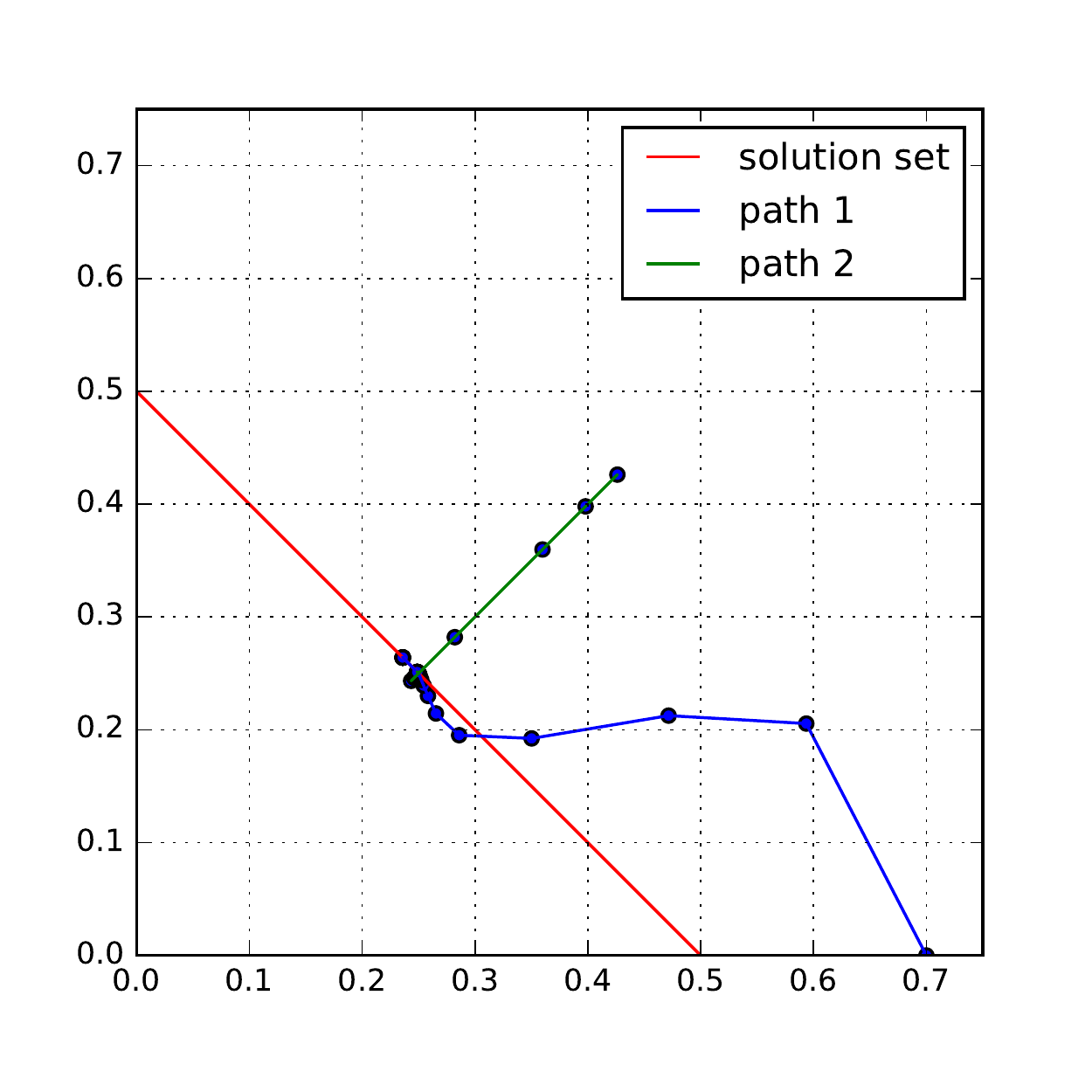}
    \caption{Path}
  \end{subfigure}%
  ~ 
  \begin{subfigure}[t]{0.45\textwidth}
    \centering
    \includegraphics[height=2in]{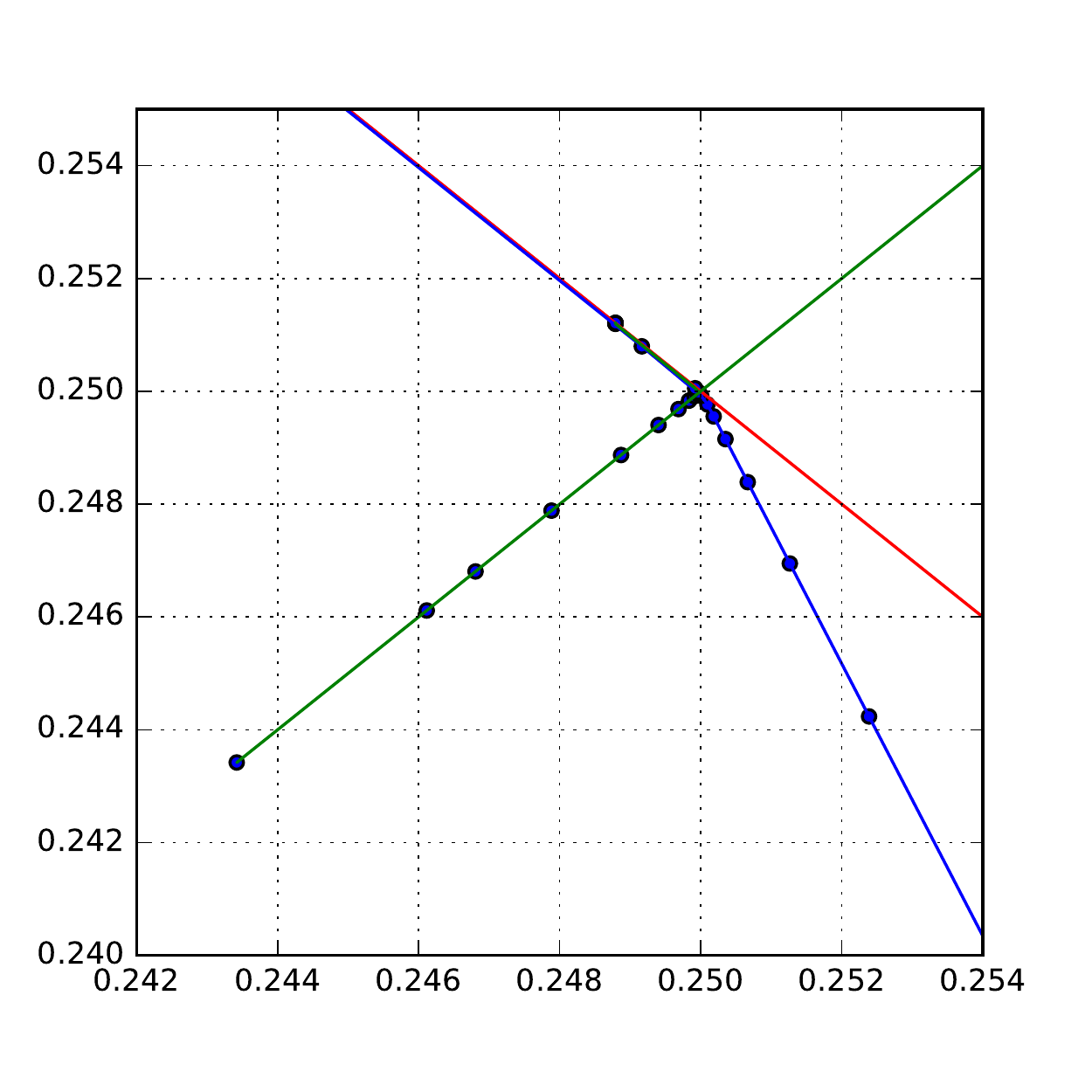}
    \caption{Zoom around the analytical center}
  \end{subfigure}
  \caption{Algorithm path. The red line corresponds to the solution set $\Xl$, the blue line is the algorithm path for $x^0 = (0.7 \, 0)^*$ and the green line for $x^0$ obtained by a least square.}
  \label{fig:path}
\end{figure}

\renewcommand{\algorithmicrequire}{\textbf{Input:}}

\begin{algorithm}
  \caption{Adapted predictor-corrector Mehrotra's algorithm}\label{alg:main}
  \begin{algorithmic}
    \Require $(x^0,z^0,u^0,s^0)$, $\Phi$, $D^*$, $y$, $\epsilon > 0$, $\eta \in (0,1)$
    \State $Q \gets \Phi^* \Phi$, $c \gets \Phi^* y$
    \State Set complementarity measure
    \[
          r_1 \gets Qx-c-D^*u,
          r_2 \gets \lambda e-s-\tilde{I}^*u,
          r_3 \gets Zs,
          r_4 \gets D^*x+\tilde{I}z.
    \] 
    \State $\mu \gets \displaystyle{\langle z,s\rangle\over 2p}$

    \While{$\max\{\|r_1\|_2,\|r_2\|_2,\|r_3\|_2,\|r_4\|_2\}>\epsilon$}
    \State Compute the affine scaling direction $(d_x^a,d_z^a,d_u^a,d_s^a)$ by solving the system
    \[
      \left\{
        \begin{array}{ll}
          Qd_x^a-D^*d_u^a&=-r_1 \\
          -d_s^a-\tilde{I}^*d_u^a&=-r_2 \\
          Sd_z^a+Zd_s^a&=-r_3 \\
          D^*d_x^a+\tilde{I}d_z^a&=-r_4,
        \end{array}
      \right.
    \]
    \State $t^a_{\max} \gets \max\{ t\geq 0:\ z+td_z^a\geq 0,\ s+d_s^a\geq 0\}$
    \State $\mu^a \gets \displaystyle{\langle z+t_{\max}^ad_z^a,s+t_{\max}^ad_s\rangle\over 2p}$
    \State $\sigma \gets \displaystyle{\left(\mu^a\over\mu\right)^3}$ \Comment{centering parameter}
    \State Compute corrector and centering direction $(d_x^c,d_z^c,d_u^c,d_s^c)$ by solving
    \[
      \left\{
        \begin{array}{ll}
          Qd_x^c-{D^*}^*d_u^c&=0\\
          -d_s^c-\tilde{I}^*d_u^c&=0\\
          Sd_z^c+Zd_s^c&=-D_z^ad_s^a+\sigma\mu e\\
          D^*d_x^a+\tilde{I}d_z^a&=0,
        \end{array}
      \right. \qwhereq D_z^a=\diag(d_z^a)
    \]
    \State $(d_x,d_z,d_u,d_s) \gets (d_x^a,d_z^a,d_u^a,d_s^a)+(d_x^c,d_z^c,d_u^c,d_s^c)$ \Comment{predictor direction}
    \State $t_{\max} \gets \max\{ t\geq 0:\ z+td_z\geq 0,\ s+d_s\geq 0\}$
    \State $(x,z,u,s) \gets (x,z,u,s)+\eta t_{\max}(d_x,d_z,d_u,d_s)$
    \State Update complementarity measure
    \[
          r_1 \gets Qx-c-D^*u,
          r_2 \gets \lambda e-s-\tilde{I}^*u,
          r_3 \gets Zs,
          r_4 \gets D^*x+\tilde{I}z.
    \]
    \EndWhile
  \end{algorithmic}
\end{algorithm}

\bibliographystyle{siamplain}
\bibliography{biblio}

\end{document}